\documentclass{amsart}
\usepackage{amssymb,amsmath,amscd,enumerate,verbatim,diagbox}
\usepackage[all]{xy}
\usepackage{enumerate}
\usepackage{tikz}
\usepackage{mathrsfs}
\usepackage{graphicx}


\DeclareMathOperator{\Char}{char}

\newcommand{\N}{\mathbb{N}}

\newcommand{\Pan}{{\operatorname{Pan}}}
\newcommand{\CAE}{{\operatorname{CE}}}
\newcommand{\reg}{{\operatorname{reg}}}

\newcommand{\kk}{\Bbbk}

\newtheorem{pro}{Proposition}[section]
\newtheorem{Lem}[pro]{Lemma}
\newtheorem{Cor}[pro]{Corollary} 
\newtheorem{Theo}[pro]{Theorem}

\theoremstyle{definition}
\newtheorem{Defi}[pro]{Definition}
\newtheorem{Exem}[pro]{Example}

\newtheorem{Conj}[pro]{Conjecture}

\newtheorem{Ques}[pro]{Question}

\numberwithin{equation}{section}
%
%
%
%


\begin{document}

\title[WLP of algebras associated to paths and cycles]{The weak Lefschetz property of artinian algebras associated to paths and cycles}

\author[H.D. Nguyen]{Hop D. Nguyen}
\address{Institute of Mathematics, VAST, 18 Hoang Quoc Viet, Cau Giay, 10307 Hanoi, Vietnam}
\email{ngdhop@gmail.com}

\author[Q.H. Tran]{Quang Hoa Tran}
\address{University of Education, Hue University,  34 Le Loi St., Hue City, Vietnam.}
\email{tranquanghoa@hueuni.edu.vn}

\dedicatory{Dedicated to Professor Ngo Viet Trung on the occasion of his 70th birthday}

\begin{abstract}
Given a base field $\kk$ of characteristic zero, for each graph $G$, we associate the artinian algebra $A(G)$ defined by the edge ideal of $G$ and the squares of the variables. We study the weak Lefschetz property of $A(G)$. We classify some classes of graphs with relatively few edges, including paths and cycles, such that its associated artinian ring has the weak Lefschetz property.
\end{abstract}

\makeatletter
\@namedef{subjclassname@2020}{%
	\textup{2020} Mathematics Subject Classification}
\makeatother

\subjclass[2020]{13E10, 13F20, 13F55, 05C31}
\keywords{Artinian algebras; edge ideals; independence polynomials; weak Lefschetz property}

\maketitle
%

\section{Introduction}
A graded artinian algebra $A=[A]_{0} \oplus [A]_{1} \oplus \cdots \oplus [A]_{D}$ over a field $\kk$ has the weak Lefschetz property (WLP for short) if there exists a linear form $\ell \in [A]_{1}$ such that each multiplication maps $\cdot \ell:[A]_{i} \rightarrow [A]_{i+1}$ have maximal rank for all $i$, while $A$ has the strong Lefschetz property (SLP for short) if there exists a linear form $\ell$ such that each multiplication map $\cdot \ell^j: [A]_{i}\longrightarrow [A]_{i+j}$ has maximal rank for all $i$ and all $j$. The study of Lefschetz properties of graded algebras has connections to several areas of mathematics. Many authors have studied the problem from many different points of view, applying tools from representation theory, algebraic topology, differential geometry, commutative algebra, among others (see, for instance, \cite{BK2007, HSS2011,   MMO2013, MMR03,MMR17, MMN2011, MMN2012,MN2013,  MiroRoig2016,  MRT19, MRT2019, MRT2020, Stanley1980}). Even the characteristic of $\kk$ plays an interesting role in the study of the Lefschetz properties; see, for example, \cite{BK2011,CookII2012,CN2011,KV2011,LF2010,MMN2011}. 

The case of artinian $\kk$-algebras defined by monomial ideals, while being rather accessible, is far from simple and the literature concerning their Lefschetz properties is quite extensive; see, for instance, \cite{AB2020,AN2018,DaoNair2022, GLN2020, MM2016,MMO2013,MMN2011,MNS2020}  and the references therein. In this work, we focus on a special class of artinian algebras defined by quadratic monomials, given as follows. Let $G=(V,E)$ be a simple graph on the vertex set $V=\{1,2,\ldots,n\}$ and $R=\kk[x_1,\ldots,x_n]$ be the standard graded polynomial ring over $\kk$. The \emph{edge ideal} of $G$ is given by
$$
I(G)=(x_ix_j\mid \{i,j\}\in E)\subset R.
$$
Then, we say that 
$$
A(G)=\frac{R}{(x_1^2,\ldots,x_n^2)+I(G)}
$$
is the {\it artinian algebra associated to $G$}. We are interested in the following question.
\begin{Ques}
\label{quest_WLPofAG}
For which graphs $G$ does $A(G)$ have the WLP or the SLP? If $A(G)$ does not have the WLP or the SLP, in which degrees do the multiplication maps fail to have maximal rank?
\end{Ques}
The algebra $A(G)$ has been studied in \cite{QHT2020} where the second author classifies the WLP/SLP for some special classes of graphs including the complete graphs, the star graphs, the Barbell graphs, and the wheel graphs. Note that artinian algebras defined by quadratic monomial relations were considered in previous work by Michałek--Mir\'o-Roig \cite{MM2016}, and Migliore--Nagel--Schenck \cite{MNS2020}. These rings can be regarded as special cases of a more general construction due to Dao and Nair \cite{DaoNair2022} where they associate to each simplicial complex $\Delta$ on $n$ vertices the ring
\[
A(\Delta)=\frac{R}{\left( x_{1}^{2}, \ldots, x_{n}^{2}\right) + I_{\Delta}},
\]
in which $I_{\Delta}$ is the Stanley-Reisner ideal of $\Delta$. A recent work due to Cooper et al. \cite{Cooperetal2023}  also investigates the WLP of $A(G)$ where the focus was on whiskered graphs.

Our main goal in this note is to classify some important classes of graphs $G$ where $A(G)$ has the weak Lefschetz property, such as paths, cycles, and certain tadpole graphs. More precisely, denote  by $P_n, C_n, \Pan_n$ the paths, cycles, pan graphs (namely a cycle together with a pendant attached to one vertex), respectively. Our main results are the following.
\begin{Theo}[Theorems~\ref{thm_WLP_paths}, \ref{thm_WLP_cycles} and~\ref{thm_WLP_pan}]
Assume that $\Char(\kk)=0$.
\begin{enumerate}
\item[\rm (1)] For an integer $n\ge 1$, the ring $A(P_n)$ has the WLP if and only if $n\in \{1,2,\ldots,7,9,10,13\}.$ 
\item[\rm (2)] For an integer $n\ge 3$, the ring $A(C_n)$ has the WLP if and only if $n\in \{3,4,\ldots,11,13,14,17\}.$  
\item[\rm (3)] For an integer $n\ge 3$, the ring $A(\Pan_n)$ has the WLP if and only if  $n\in \{3,4,\ldots,10,12,13,16\}.$ 
\end{enumerate}
\end{Theo}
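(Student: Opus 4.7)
The plan is to analyze each of the three families separately, using the common observation that $A(G)$ has a monomial $\kk$-basis given by the classes $\prod_{i\in S}x_i$, where $S$ ranges over the independent sets of $G$. Consequently the Hilbert function of $A(G)$ in degree $k$ equals $i_k(G)$, the number of independent sets of cardinality $k$, so the Hilbert series of $A(G)$ is precisely the independence polynomial of $G$. For the path one has $\dim_\kk A(P_n)_k=\binom{n-k+1}{k}$, for the cycle $\dim_\kk A(C_n)_k=\tfrac{n}{n-k}\binom{n-k}{k}$, while $\Pan_n$ admits a slightly more involved formula obtained by splitting according to whether the pendant vertex belongs to the independent set.

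For the ``if'' direction, I would establish WLP for each $n$ in the listed sets by choosing the symmetric candidate $\ell=x_1+\cdots+x_n$ and directly computing the rank of each multiplication map $\cdot\ell:A(G)_k\to A(G)_{k+1}$ in the monomial basis. The matrix of this map is the incidence matrix between independent sets of sizes $k$ and $k+1$, with entry $1$ for $S\subset T$ and $0$ otherwise. Since the listed graphs carry large automorphism groups (dihedral for cycles, reflection for paths, and a reflection fixing the pendant for pan graphs), this incidence map splits as a direct sum over the isotypic components of the group action, reducing each rank problem to a substantially smaller block matrix. Each of the finitely many exceptional values can then be settled either by hand or by a bounded Macaulay2 computation.

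For the ``only if'' direction---which I expect to be the main obstacle---one must exhibit, for every $n$ outside the prescribed sets, a degree $k$ in which $\cdot\ell$ fails to have maximal rank for \emph{every} linear form $\ell$. The strategy is to locate $k$ past the peak of the Hilbert function (so that $\dim_\kk A(G)_k>\dim_\kk A(G)_{k+1}$) and to use the closed-form expressions above to establish an explicit numerical inequality forcing non-surjectivity. A clean way to do this is to show that the image of $\cdot\ell$ is contained in the annihilator of a carefully chosen element of the Macaulay inverse system of $A(G)$ in degree $k+1$; constructing such an element amounts to a combinatorial calculation with independent sets that exploits the path/cycle structure. For the pan graph, a short exact sequence obtained by restricting and deleting the pendant reduces the analysis to a comparison with $A(P_{n-1})$ and $A(C_{n-1})$, so the failure patterns established in parts (1) and (2) can be transported to part (3).

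The principal obstacle lies in pinning down the exceptional values $n\in\{9,10,13\}$, $\{13,14,17\}$, and $\{12,13,16\}$ where the crude numerical obstruction coincidentally disappears---these correspond to arithmetic accidents in the independence polynomial. I would handle them via the symmetry-reduced rank computation supplemented by a Smith-normal-form analysis of the relevant block matrices, together with a careful term-by-term comparison of $i_k(G)$ and $i_{k+1}(G)$ as $n$ crosses each exceptional threshold; the delicacy is that the WLP windows shift between the three families in parallel (differing by roughly $3$), so the computations for $P_n$, $C_n$ and $\Pan_n$ ought to be orchestrated together rather than independently.
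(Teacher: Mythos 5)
There is a genuine gap in the ``only if'' direction, which is the heart of the theorem: you must rule out the WLP for \emph{infinitely} many $n$ (all $n\ge 17$ for paths, $n\ge 18$ for cycles, $n\ge 17$ for $\Pan_n$), and your proposal only gestures at a strategy there. Locating a degree $k$ past the peak of the Hilbert function gives no ``numerical inequality forcing non-surjectivity'': when $\dim_\kk [A]_k>\dim_\kk[A]_{k+1}$, maximal rank \emph{means} surjectivity, and its failure cannot be read off from dimension counts. Your fallback --- producing an element of the inverse system annihilating the image of $\cdot\ell$ uniformly in $n$ --- is exactly the hard part, and you give no construction of it. The paper instead proves failure by induction on $n$: short exact sequences such as $0\to A(P_{n-2})(-1)\xrightarrow{\cdot x_n} A(P_n)\to A(P_{n-1})\to 0$ (and variants obtained by multiplying by a suitable monomial), together with the tensor-product lemma of Boij et al.\ (Lemma~\ref{lem_tensor}), transport a surjectivity (or injectivity) failure from smaller graphs to larger ones. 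Making this work requires that the failure occur at the mode of the independence polynomial and that the modes $\lambda_n,\rho_n,\chi_n,\zeta_n$ of $P_n$, $C_n$, $\CAE_n$, $\Pan_n$ line up correctly; this is why the paper proves the explicit mode formulas and the comparison inequalities (Lemmas~\ref{lem_compare_lambda}, \ref{lem_compare_rho_lambda}, \ref{lem_compare_lambda_chi}, \ref{lem_compare_lambda_chi-zeta}), plus a separate injectivity-failure statement for paths (Proposition~\ref{Fails_injectivityofPaths}) that is needed in the pan case. None of this inductive machinery, nor any substitute for it, appears in your proposal.

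Two further points. First, your reduction for $\Pan_n$ is off: deleting the pendant vertex yields $C_n$, not $C_{n-1}$, and the paper's actual argument for pans needs the auxiliary graph $\CAE_{n+1}$ (a cycle with a chord), a direct proof that $I(\Pan_n;t)$ is unimodal (the pan is not claw-free, so the claw-free unimodality theorem you implicitly rely on for the other families does not apply), and a two-subcase analysis using injectivity failures of $A(P_m)$; simply ``transporting'' the patterns from parts (1) and (2) does not suffice because the relevant degrees shift. Second, for the ``only if'' direction you must defeat \emph{every} linear form $\ell$; the clean way is the standard reduction for monomial ideals (Proposition~\ref{Proposition2.5}) to $\ell=x_1+\cdots+x_n$, which you never invoke --- without it your inverse-system obstruction would have to be constructed uniformly over all $\ell$. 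The ``if'' direction (finitely many $n$, verified by symmetry-reduced rank computations or Macaulay2) is fine and matches the paper in spirit.
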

The proof combines Macaulay2 \cite{Macaulay2} computations with inductive arguments based on the unimodality of the independence polynomials of the relevant graphs. We  hope that our main results will inspire further research on Question \ref{quest_WLPofAG}.

Our paper is structured as follows. In the next section we recall relevant terminology and results on artinian algebras, Lefschetz properties, and graph theory. In Section~\ref{Section3}, we investigate the unimodality and the mode of the independence polynomials of familiar graphs, such as paths, cycles and pan graphs. These results are useful to study the WLP of artinian algebras associated to these graphs. In Section~\ref{Section4}, we prove the our main theorems (see Theorems~\ref{thm_WLP_paths},~\ref{thm_WLP_cycles} and~\ref{thm_WLP_pan}) on the WLP of the artinian algebras associated to paths, cycles and the pan graphs.

\section{Preliminaries}
In this section we recall some standard terminology and notations from commutative algebra and combinatorial commutative algebra, as well as some results needed later on. For a general introduction to artinian rings and the weak and strong Lefschetz properties we refer the readers to \cite{HMMNWW2013} and  \cite{MN2013}.
\subsection{ The weak Lefschetz property}

In this paper we consider artinian algebras defined by monomial ideals, and in this case it suffices to chose the Lefschetz element to be the sum of the variables.
\begin{pro}{\rm \cite{MMN2011, Nick2018}}\label{Proposition2.5}
Let $I\subset R=\kk[x_1,\ldots,x_n]$ be an artinian monomial ideal. Then $A=R/I$ has the WLP if and only if $\ell=x_1+x_2+\cdots+x_n$ is a Lefschetz element for $A$.
\end{pro}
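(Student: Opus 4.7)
I focus on the ``only if'' direction, as the converse is immediate from the definition of the WLP. Set
\[
U = \bigl\{ \ell \in [A]_1 : \cdot \ell \colon [A]_i \to [A]_{i+1} \text{ has maximal rank for every } i \bigr\}.
\]
My plan is to prove that $U$ is a non-empty Zariski open subset of $[A]_1$ that is stable under the coordinate torus action, and then to deduce that $\ell_0 := x_1 + \cdots + x_n$ must lie in any such subset.

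Openness of $U$ will come from fixing a monomial $\kk$-basis of each graded piece $[A]_i$ (available because $I$ is a monomial ideal): then the matrix of $\cdot \ell$ has entries linear in the coordinates of $\ell$, so ``maximal rank'' is the non-vanishing of an appropriate minor, a Zariski open condition, and $U$ is a finite intersection of such opens. Torus invariance will come from the fact that a monomial ideal is stable under the rescaling $x_i \mapsto t_i x_i$ for $t = (t_1,\ldots,t_n) \in T = (\kk^\ast)^n$; this rescaling induces a graded $\kk$-algebra automorphism $\varphi_t$ of $A$, under which $\cdot \ell$ is conjugate to $\cdot (t\cdot \ell)$ and hence has the same rank. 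Therefore $\ell \in U$ if and only if $t\cdot \ell \in U$.

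With both properties in hand, the conclusion follows from a short orbit argument. The WLP hypothesis provides some Lefschetz element, so $U \neq \emptyset$; hence $[A]_1 \setminus U$ is a proper closed $T$-stable subscheme of $[A]_1$. Because the $T$-action on the coordinate ring is by rescaling variables, the $T$-invariant ideals are precisely the monomial ideals, so the closed $T$-stable subsets of $[A]_1$ are exactly the unions of coordinate subspaces. Thus $[A]_1 \setminus U$ is such a union, and since it is proper, none of its components equals $[A]_1$ itself; in particular, every form in the complement has at least one vanishing coordinate. The form $\ell_0 = x_1 + \cdots + x_n$ has every coordinate equal to $1$, so $\ell_0 \notin [A]_1 \setminus U$, i.e., $\ell_0 \in U$, as required. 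The only conceptual insight is recognizing the monomiality of $I$ as the source of the torus symmetry; once that is in place the remaining orbit argument is elementary, and I anticipate no real obstacle.
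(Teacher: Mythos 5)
Your argument is correct and is essentially the standard proof of this result from the cited references (the paper itself gives no proof, only the citation to Migliore--Mir\'o-Roig--Nagel): maximal rank is a Zariski-open condition on $\ell\in[A]_1$, and the diagonal rescaling torus preserves any monomial ideal, so a single Lefschetz element can be moved to $x_1+\cdots+x_n$; your orbit/invariant-ideal finish is just a mild variant of the usual step ``a nonempty open set meets the dense open locus where all coordinates are nonzero, and such an $\ell$ rescales to $(1,\dots,1)$.'' The only hypothesis to make explicit is that $\kk$ must be infinite (both for density and for ``torus-invariant ideals are monomial''), which is harmless here since the paper works in characteristic zero.
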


A necessary condition for the WLP and SLP of an artinian algebra $A$ is the unimodality of the Hilbert series of $A$. 
\begin{Defi}
Let $A=\oplus_{j\geq 0} [A]_j$ be a standard graded $\kk$-algebra. The {\it Hilbert series} of $A$ is the power series $\sum\dim_\kk [A]_i t^i$ and is denoted by $HS(A,t)$. The {\it Hilbert function} of $A$ is the function  $h_A: \N\longrightarrow \N$ defined by $h_A(j)=\dim_\kk [A]_j$.\\
If $A$ is an artinian graded algebra, then $[A]_i=0$ for $i\gg 0.$  Denote
$$D=\max\{i\mid [A]_i\neq 0\},$$
the \emph{socle degree} of $A$. In this case, the Hilbert series of $A$ is a polynomial
$$HS(A,t)=1+h_1t+\cdots+ h_Dt^D,$$
where $h_i=\dim_\Bbbk [A]_i>0$. By definition, the degree of the Hilbert series for an artinian graded algebra $A$ is equal to its socle degree $D.$ Since $A$ is artinian and non-zero, this number also agrees with the {\it Castelnuovo-Mumford regularity} of $A$, i.e,
$$\reg (A)= D =\deg (HS(A,t)).$$
The algebra $A$ is called \emph{level} if its socle is concentrated in one degree. 
\end{Defi}

\begin{Defi}
A polynomial $\sum_{k=0}^na_kt^k \in \mathbb{R}[t]$ with non-negative coefficients is called {\it unimodal} if there is some $m$, such that
$$a_0\leq a_1\leq \cdots \leq a_{m-1}\leq a_m \geq a_{m+1}\geq \cdots \geq a_n.$$ 
Set $a_{-1}=0$. The {\it mode} of the unimodal polynomial $\sum_{k=0}^na_kt^k$ is defined to be the unique integer $i$ between $0$ and $n$ such that
$$
a_{i-1}< a_i \geq a_{i+1}\geq \cdots \geq a_n.
$$
\end{Defi}

\begin{pro}{\rm \cite[Proposition~3.2]{HMMNWW2013}}\label{Proposition2.8}
If $A$ has the WLP or SLP then the Hilbert series of $A$ is unimodal.
\end{pro}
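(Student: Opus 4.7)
The plan is to prove the WLP case; the SLP case then follows immediately because SLP implies WLP (take $j=1$ in the SLP definition). Fix a weak Lefschetz element $\ell \in [A]_1$, so that $\cdot\ell : [A]_i \to [A]_{i+1}$ has maximal rank for every $i$. Since maximal rank means rank equal to $\min(h_i,h_{i+1})$, each such map is either injective (forcing $h_i \leq h_{i+1}$) or surjective (forcing $h_i \geq h_{i+1}$). If no strict decrease $h_i > h_{i+1}$ ever occurs, the Hilbert series is non-decreasing and hence unimodal. Otherwise, let $m$ be the smallest index with $h_m > h_{m+1}$. By the minimality of $m$, the sequence $h_0,h_1,\ldots,h_m$ is non-decreasing, and at degree $m$ the map $\cdot\ell$ must be surjective, since it cannot be injective.

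The key step I would establish is the following propagation lemma: if $\cdot\ell : [A]_j \to [A]_{j+1}$ is surjective, then so is $\cdot\ell : [A]_{j+1} \to [A]_{j+2}$. This is where the standard graded hypothesis enters essentially. The argument is that surjectivity gives $[A]_{j+1} = \ell\cdot [A]_j$, and therefore
\[
[A]_{j+2} \;=\; [A]_1 \cdot [A]_{j+1} \;=\; [A]_1 \cdot \ell\, [A]_j \;=\; \ell \cdot [A]_1 \cdot [A]_j \;=\; \ell \cdot [A]_{j+1},
\]
using commutativity of $A$ together with the fact that $A$ is generated in degree one. Iterating from $j = m$, the map $\cdot\ell : [A]_j \to [A]_{j+1}$ is surjective for every $j \geq m$, so $h_m \geq h_{m+1} \geq \cdots \geq h_D$. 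Combined with the earlier non-decreasing segment, this is exactly unimodality of the Hilbert series, with mode equal to $m$.

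The only real subtlety is the propagation lemma; once it is in place, the rest is bookkeeping. A naive attempt to propagate surjectivity without exploiting standard gradedness fails, because surjectivity at one degree does not \emph{a priori} constrain the next pair of dimensions: it is precisely the ability to commute $\ell$ past $[A]_1$ and then reuse the degree-one generation of $A$ that lets surjectivity cascade upward through the algebra.
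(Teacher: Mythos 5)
Your argument is correct. Note that the paper does not actually prove this statement; it is quoted from the reference [HMMNWW2013, Proposition~3.2], and your proof is essentially the standard one found there: below the first strict descent the maximal-rank maps must be injective, at the descent the map $\cdot\ell$ must be surjective, and surjectivity of $\cdot\ell\colon [A]_j\to[A]_{j+1}$ propagates to all higher degrees because $[A]_{j+2}=[A]_1\cdot[A]_{j+1}=\ell\,[A]_1\,[A]_j=\ell\,[A]_{j+1}$, which uses precisely the standard-graded hypothesis you single out. The reduction of the SLP case to the WLP case via $j=1$ is also fine.
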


Finally, to study the failure of the WLP of tensor products of $\kk$-algebras, the following simple lemma turns out to be quite useful.

\begin{Lem}{\rm\cite[Lemma 7.8]{BMMNZ12}}\label{lem_tensor}
Let $A=A'\otimes_\kk A''$ be a tensor product of two graded artinian $\kk$-algebras $A'$ and $A''$. Let $\ell'\in A'$ and $\ell''\in A''$ be linear elements, and set $\ell=\ell'+\ell''=\ell'\otimes 1 + 1\otimes \ell''\in A$. Then
\begin{enumerate}
\item [\rm (a)] If the multiplication maps $\cdot\ell': [A']_{i}\longrightarrow [A']_{i+1}$ and $\cdot\ell'': [A'']_{j}\longrightarrow [A'']_{j+1}$ are both not surjective, then neither is the map
$$
\cdot\ell: [A]_{i+j+1}\longrightarrow [A]_{i+j+2}.
$$
\item [\rm (b)] If the multiplication maps $\cdot\ell': [A']_{i}\longrightarrow [A']_{i+1}$ and $\cdot\ell'': [A'']_{j}\longrightarrow [A'']_{j+1}$ are both not injective, then neither is the map
$$
\cdot\ell: [A]_{i+j}\longrightarrow [A]_{i+j+1}.
$$

\end{enumerate}
\end{Lem}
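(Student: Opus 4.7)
The plan is to treat part (b) directly via a concrete kernel element and part (a) dually via a witness functional, exploiting the bigrading on $A = A'\otimes_\kk A''$ that refines the usual grading: $[A]_n = \bigoplus_{p+q=n} [A']_p \otimes_\kk [A'']_q$.

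For part (b), I would start from the given hypotheses: pick nonzero $a'\in [A']_i$ with $\ell'a' = 0$ and nonzero $a''\in [A'']_j$ with $\ell'' a'' = 0$. Then $a'\otimes a''$ is a nonzero element of $[A']_i\otimes [A'']_j \subset [A]_{i+j}$ (nonzero because the tensor product over a field of nonzero elements is nonzero). A direct computation
\[
\ell\cdot(a'\otimes a'') = (\ell'a')\otimes a'' + a'\otimes(\ell''a'') = 0
\]
exhibits an element of the kernel of $\cdot\ell\colon [A]_{i+j}\to [A]_{i+j+1}$, proving non-injectivity.

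For part (a), surjectivity of a linear map between finite-dimensional spaces fails iff there is a nonzero linear functional on the target that vanishes on the image, so I would construct such a functional on $[A]_{i+j+2}$. By the hypotheses and duality, there exist nonzero $\varphi'\in [A']_{i+1}^*$ with $\varphi'(\ell'\cdot[A']_i)=0$ and nonzero $\varphi''\in [A'']_{j+1}^*$ with $\varphi''(\ell''\cdot[A'']_j)=0$. I would let $\varphi\in [A]_{i+j+2}^*$ be the functional equal to $\varphi'\otimes\varphi''$ on the bidegree-$(i+1,j+1)$ summand $[A']_{i+1}\otimes [A'']_{j+1}$ and zero on every other bidegree summand of $[A]_{i+j+2}$. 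Choosing $a'\in[A']_{i+1}$, $a''\in[A'']_{j+1}$ with $\varphi'(a')\neq 0$ and $\varphi''(a'')\neq 0$ (which exist since $\varphi',\varphi''$ are nonzero) shows $\varphi(a'\otimes a'')\neq 0$, so $\varphi$ itself is nonzero.

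The key computation is then to verify $\varphi\circ(\cdot\ell)\equiv 0$ on $[A]_{i+j+1}$. Decompose an arbitrary $x\in[A]_{i+j+1}$ by bidegree as $x=\sum_{p+q=i+j+1}x_{p,q}$. Since $\ell = \ell'\otimes 1 + 1\otimes\ell''$ shifts bidegree by $(1,0)$ or $(0,1)$, the bidegree-$(i+1,j+1)$ component of $\ell\cdot x$ receives contributions only from $x_{i,j+1}$ (via $\ell'\otimes 1$) and from $x_{i+1,j}$ (via $1\otimes\ell''$). Writing these summands as sums of simple tensors and applying $\varphi'\otimes\varphi''$, each contribution vanishes because $\varphi'$ kills $\ell'\cdot[A']_i$ in the first case and $\varphi''$ kills $\ell''\cdot[A'']_j$ in the second. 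Hence $\varphi$ vanishes on the image of $\cdot\ell$, and non-surjectivity follows. The only mildly subtle point is the bookkeeping of bidegrees, so I would spell that out carefully; otherwise both parts are short arguments.
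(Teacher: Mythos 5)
Your proposal is correct, and it is worth noting that the paper itself does not prove this lemma at all: it is quoted verbatim with a citation to \cite[Lemma 7.8]{BMMNZ12}, so the only comparison available is with the standard argument in that reference. Your treatment of (b) — taking $a'\in\ker(\cdot\ell')\cap[A']_i$ and $a''\in\ker(\cdot\ell'')\cap[A'']_j$ and observing that $0\neq a'\otimes a''$ satisfies $\ell\cdot(a'\otimes a'')=(\ell'a')\otimes a''+a'\otimes(\ell''a'')=0$ — is exactly the expected direct argument (no sign issues arise, since this is the ordinary tensor product of commutative algebras). For (a), your functional supported on the bidegree-$(i+1,j+1)$ summand of $[A]_{i+j+2}$ is correct: the bidegree bookkeeping you describe shows the $(i+1,j+1)$ component of $\ell x$ lies in $\ell'[A']_i\otimes[A'']_{j+1}+[A']_{i+1}\otimes\ell''[A'']_j$, which $\varphi'\otimes\varphi''$ annihilates, while $\varphi'\otimes\varphi''\neq 0$ since $\kk$ is a field; the needed existence of $\varphi',\varphi''$ uses only that the graded pieces of an artinian algebra are finite-dimensional. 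A marginally slicker packaging of (a), essentially the one in \cite{BMMNZ12}, is to note the graded surjection $A/\ell A\twoheadrightarrow (A'/\ell'A')\otimes_\kk(A''/\ell''A'')$ and that the target is nonzero in degree $i+j+2$ because it contains $[A'/\ell'A']_{i+1}\otimes[A''/\ell''A'']_{j+1}\neq 0$; your dual-functional argument is the same computation phrased in the dual, so nothing is gained or lost beyond taste. In short: the proof is complete and correct, and fills in a proof the present paper delegates to the literature.
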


\subsection{Graph theory}
From now on, by a graph we mean a simple graph $G=(V,E)$ with the vertex set $V=V(G)$ and the edge set $E=E(G)$. 
We start by recalling some basic definitions. 
\begin{Defi} 
The {\it disjoint union} of the graphs $G_1$ and $G_2$ is a graph $G=G_1\cup G_2$ having as vertex set the disjoint union of $V(G_1)$ and $V(G_2)$, and as edge set the disjoint union of $E(G_1)$ and $E(G_2)$. In particular, $\cup_{n}G$ denotes the disjoint union of $n>1$ copies of the graph $G$.
\end{Defi}
\begin{Defi} Let $G=(V,E)$ be a graph.
\begin{enumerate}[\quad \rm (i)]
\item  A subset $X$ of $V$ is called an {\it independent set} of $G$ if for any $i,j\in X,\ \{i,j\}\notin E$, i.e., the vertices in $X$ are pairwise non-adjacent. If an independent set $X$ has $k$ elements, then we say that $X$ is an {\it independent set of size $k$} or a $k$-independent set of $G$.
\item  An independent set $X$ is called {\it maximal} if for every vertices $v\in V\setminus X$, $X\cup \{v\}$ is not an independent set of $G$.
\item The {\it independence number} of a graph $G$ is the largest cardinality of an independent set of $G$. We denote this value by $\alpha(G)$.
\item  A graph $G$ is said to be  {\it well-covered} if every maximal independent set of $G$ has the same size $\alpha(G).$
\end{enumerate}
\end{Defi}
\begin{Defi}
The {\it independence polynomial} of a graph $G$ is a polynomial in one variable $t$ whose coefficient of $t^k$ is given by the number of independent sets of size $k$ of $G$. We denote this polynomial by $I(G;t)$, i.e.,
$$I(G;t)=\sum_{k=0}^{\alpha(G)}s_k(G)t^k,$$
where $s_k(G)$ is the number of independent sets of size $k$ in $G$. Note that $s_0(G)=1$ since $\emptyset$ is an independent set of any graph $G$.
\end{Defi}
The independence polynomial of a graph was defined by Gutman and Harary in \cite{GH83} as a generalization of the matching polynomial of a graph. For a vertex $v\in V$, define $N(v) = \{w \mid  w \in V \;\text{and}\; \{v,w\}\in E\}$ and
$N[v] = N(v) \cup \{v\}$. The following equalities are very useful for the calculation of the independence polynomial for various families of graphs (see, for instance, \cite{GH83,HL94}).
\begin{pro}\label{FormulaforIndependence}
Let $G_1,G_2, G$ be the graphs. Assume that $G=(V,E), w\in V$ and  $e=\{u,v\}\in E$. Then the following equalities hold:
\begin{enumerate}
\item [\rm (i)] $I(G;t)=I(G\setminus w;t)+t\cdot I(G\setminus N[w];t)$;
\item [\rm (ii)] $I(G;t)=I(G\setminus e;t)-t^2\cdot I(G\setminus (N(u)\cup N(v));t)$;
\item [\rm (iii)] $I(G_1\cup G_2;t)=I(G_1;t)I(G_2;t)$.
\end{enumerate}
 \end{pro}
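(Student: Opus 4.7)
The plan is to prove each of the three identities by a direct combinatorial bijection on independent sets, then compare the coefficient of $t^k$ on both sides. Throughout, I would fix a graph $G=(V,E)$ and use the definition
\[
I(G;t)=\sum_{k\ge 0}s_k(G)\,t^k,
\]
so the identities reduce to counting statements about $s_k(\cdot)$.

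For part (iii), I would start here because it is the cleanest. The key observation is that since $G_1$ and $G_2$ share no edges and no vertices, a subset $X\subseteq V(G_1)\cup V(G_2)$ is independent in $G_1\cup G_2$ if and only if $X\cap V(G_1)$ is independent in $G_1$ and $X\cap V(G_2)$ is independent in $G_2$. Thus $s_k(G_1\cup G_2)=\sum_{i+j=k}s_i(G_1)s_j(G_2)$, which is precisely the Cauchy product encoded in the identity.

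For part (i), I would partition the independent sets of $G$ according to whether they contain the fixed vertex $w$. Sets not containing $w$ are exactly the independent sets of $G\setminus w$, contributing $I(G\setminus w;t)$. Sets containing $w$ are in bijection with independent sets of $G\setminus N[w]$ via $X\mapsto X\setminus\{w\}$: since independence forbids any neighbor of $w$ from lying in $X$, the remaining part $X\setminus\{w\}$ is an independent set in the induced subgraph on $V\setminus N[w]$, and conversely any such set plus $\{w\}$ is independent in $G$. The shift in size by $1$ explains the factor $t$.

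For part (ii), fix an edge $e=\{u,v\}$ and compare independent sets of $G$ with those of $G\setminus e$. Every independent set of $G$ is also independent in $G\setminus e$, so the ``extra'' independent sets in $G\setminus e$ are exactly those containing both $u$ and $v$. Such a set, once we remove $u$ and $v$, must avoid all neighbors of $u$ and all neighbors of $v$, and is therefore in bijection with an independent set of $G\setminus(N(u)\cup N(v))$; the size changes by $2$, giving the $t^2$ factor. Rearranging yields (ii). The main (mild) subtlety is to be careful that, in removing $N(u)\cup N(v)$, the vertices $u$ and $v$ themselves are deleted (they lie in $N(v)$ and $N(u)$ respectively), so the bijection is between independent sets of $G$ containing $\{u,v\}$ and arbitrary independent sets of the induced subgraph on $V\setminus(N(u)\cup N(v))$; once this is checked, the identity follows. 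None of these steps presents a genuine obstacle — the whole argument is essentially a careful bookkeeping of which vertices are forbidden after each deletion.
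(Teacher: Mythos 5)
Your proof is correct in all three parts, and the bijections are set up carefully — in particular you correctly note for (ii) that $u\in N(v)$ and $v\in N(u)$, so both endpoints are deleted in $G\setminus(N(u)\cup N(v))$, which is the one point where a sloppy argument could go wrong. For comparison: the paper does not prove this proposition at all; it records the identities as standard facts and cites Gutman--Harary and Hoede--Li, so your write-up simply supplies the standard elementary argument (partition by containment of $w$ for (i), the extra independent sets of $G\setminus e$ containing both endpoints for (ii), and the product decomposition of independent sets over the two components for (iii)) that those references rely on. Nothing is missing.
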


\subsection{Artinian algebras associated to graphs}
Let $G=(V,E)$ be a graph, with the set of vertices $V=\{1,2,\ldots,n\}$. Let $R=\kk[x_1,\ldots,x_n]$ be a standard graded polynomial ring over $\kk$. The edge ideal of $G$ is the ideal
$$I(G)=(x_ix_j\mid \{i,j\}\in E)\subset R.$$
Then, we say that 
$$A(G)=\frac{R}{(x_1^2,\ldots,x_n^2)+I(G)}$$
is the {\it artinian algebra associated to $G$}. The algebra $A(G)$ contains significant combinatorial information about $G$, as witnessed by
\begin{pro}
The Hilbert series of $A(G)$ is equal to the independence polynomial of $G$, i.e.
$$HS(A(G);t)=I(G;t)=\sum_{k=0}^{\alpha(G)}s_k(G)t^k.$$
As a consequence, the Castelnuovo--Mumford regularity of $A(G)$ is $\reg(A(G))=\alpha(G)$ and $A(G)$ is level if and only $G$ is well-covered.
\end{pro}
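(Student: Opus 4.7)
The plan is to show that $A(G)$ admits a natural squarefree monomial $\kk$-basis indexed by the independent sets of $G$, which immediately identifies $HS(A(G);t)$ with $I(G;t)$. Concretely, I first observe that the ideal $J=(x_1^2,\ldots,x_n^2)+I(G)$ is a monomial ideal, so $A(G)$ has a $\kk$-basis consisting of the images of the monomials not lying in $J$. Write a general monomial as $x^a=x_1^{a_1}\cdots x_n^{a_n}$ and let $S=\{i\mid a_i>0\}$ be its support. If some $a_i\geq 2$, then $x^a\in(x_i^2)\subset J$; if two distinct indices $i,j\in S$ satisfy $\{i,j\}\in E$, then $x^a\in(x_ix_j)\subset I(G)\subset J$. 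Hence the monomials outside $J$ are exactly the squarefree monomials $x_S=\prod_{i\in S}x_i$ with $S$ independent in $G$. Since $\deg(x_S)=|S|$, the number of basis elements in degree $k$ equals $s_k(G)$, giving $\dim_\kk[A(G)]_k=s_k(G)$ and hence $HS(A(G);t)=I(G;t)$.

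The regularity statement follows at once: the excerpt already records $\reg(A(G))=\deg HS(A(G);t)$ for any artinian graded algebra, and the degree of $I(G;t)$ is exactly $\alpha(G)$ by definition of the independence number.

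For the level property I would compute the socle $\operatorname{Soc}(A(G))=\{f\in A(G)\mid \m\cdot f=0\}$, where $\m=(x_1,\ldots,x_n)$. Because $A(G)$ is $\N^n$-graded, the socle has a basis among the monomials $x_S$ with $S$ independent. I claim $x_S\in\operatorname{Soc}(A(G))$ iff $S$ is a \emph{maximal} independent set. Indeed, for $i\in S$ one has $x_ix_S\in(x_i^2)\subset J$ automatically; for $i\notin S$, $x_ix_S$ is the squarefree monomial $x_{S\cup\{i\}}$, which vanishes in $A(G)$ iff $S\cup\{i\}$ contains an edge, iff $i$ is adjacent to some vertex in $S$. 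Thus $x_S$ is in the socle exactly when no vertex outside $S$ can be added while preserving independence, i.e., when $S$ is maximal. Consequently the socle decomposes as $\operatorname{Soc}(A(G))=\bigoplus_{S\text{ max. ind.}}\kk\cdot x_S$, and it is concentrated in a single degree precisely when every maximal independent set has the same cardinality $\alpha(G)$, i.e., when $G$ is well-covered.

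The only mildly delicate step is the socle analysis; no deep obstacle is expected since everything is forced by the monomial structure. The rest is a direct computation of a monomial basis once one notes that $J$ is generated by squarefree quadrics together with the pure squares $x_i^2$.
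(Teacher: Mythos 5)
Your proposal is correct, and in fact the paper states this proposition without any proof at all (it is treated as a standard fact), so there is nothing to compare against on the paper's side; your argument is the expected one. The monomial basis of $A(G)$ given by squarefree monomials $x_S$ with $S$ independent immediately yields $HS(A(G);t)=I(G;t)$, the regularity claim follows from the identity $\reg(A)=\deg HS(A,t)$ already recorded in the paper for artinian algebras together with $\deg I(G;t)=\alpha(G)$, and your socle computation is sound: since $(0:_{A(G)}\m)$ is $\N^n$-graded it is spanned by monomials, and $x_S$ lies in the socle exactly when $S$ is a maximal independent set, so the socle is concentrated in one degree (necessarily $\alpha(G)$, since a maximum independent set is maximal) if and only if $G$ is well-covered.
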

Therefore, the WLP/SLP of $A(G)$ has strong consequences on the unimodality of the independence polynomial of $G$. Indeed, if $I(G;t)$ is not unimodal, then $A(G)$ fails the WLP by Proposition~\ref{Proposition2.8}. Thus, to study  the WLP/ SLP of $A(G)$, it is enough to consider the graphs whose independence polynomials are unimodal. Concerning the unimodality of the independence polynomial of graphs, we have the following famous conjecture.
\begin{Conj}\cite{AEMS87}\label{unimodalityconjecture}
If $G$ is a tree or forest, then the independence polynomial of $G$ is unimodal.
\end{Conj}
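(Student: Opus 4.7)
The plan is to attack this conjecture by induction on the number of vertices, using the deletion recursion from Proposition~\ref{FormulaforIndependence}(i): for a leaf $w$ of a tree $T$ with unique neighbor $u$,
\[
I(T;t) = I(T\setminus w;t) + t\cdot I(T\setminus N[w];t),
\]
where $T\setminus w$ is a tree on fewer vertices and $T\setminus N[w] = T\setminus \{u,w\}$ is a forest. Since the conjecture is stated for forests as well, by part (iii) of the same proposition the inductive hypothesis applies to each summand through the factorization of the independence polynomial over connected components.

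The main obstacle---and the reason this conjecture has remained open since 1987---is that unimodality is not preserved under either addition or multiplication of polynomials, so a direct induction on ``unimodal'' cannot close. One would like to strengthen the inductive hypothesis to a property behaving well under sums and products. The two standard candidates are real-rootedness (which implies log-concavity and hence unimodality) and log-concavity with no internal zeros (which is closed under products). Real-rootedness is already known to fail: \cite{AEMS87} exhibits trees whose independence polynomial has non-real roots. Log-concavity is a strictly stronger open conjecture, and the basic difficulty is that $p(t) + t\cdot q(t)$ need not be log-concave even when both $p$ and $q$ are.

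My approach would therefore be combinatorial. Fix the tree $T$ and let $m$ be the candidate mode predicted from the structure of $T$. For each $k<m$ I would try to construct an explicit injection
\[
\varphi_k\colon \{\text{independent sets of }T\text{ of size }k\}\hookrightarrow \{\text{independent sets of }T\text{ of size }k+1\},
\]
built recursively from analogous injections for $T\setminus w$ and for $T\setminus N[w]$ (shifted by one), with a case split according to whether $w$ belongs to the input set; a symmetric argument for $k\ge m$ would finish the proof. The hard part is controlling how the mode of $I(T;t)$ relates to those of $I(T\setminus w;t)$ and $I(T\setminus N[w];t)+1$: any successful argument must uniformly bound this shift and produce leaf-swap operations compatible with the bound. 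This structural control is precisely where every prior attempt has stalled, which is why the conjecture is currently verified only for restricted subclasses---paths, cycles, spiders, caterpillars, very well-covered trees---and remains wide open in general.
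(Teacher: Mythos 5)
This statement is not a theorem of the paper: it is the Alavi--Erd\H{o}s--Malde--Schwenk conjecture from \cite{AEMS87}, which the paper cites precisely because it is open and offers no proof of it (the paper only uses unimodality results that are actually available, namely claw-freeness \cite{Hamidoune90} and ad hoc arguments as in Lemma~\ref{lem_compare_lambda_chi-zeta}). So there is no ``paper's own proof'' to compare against, and your proposal, as you yourself acknowledge in its last sentences, is not a proof either.

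The genuine gap is the entire content of the conjecture. Your preliminary observations are accurate: the leaf recursion $I(T;t)=I(T\setminus w;t)+t\,I(T\setminus N[w];t)$ from Proposition~\ref{FormulaforIndependence}(i) is the natural induction, it fails because unimodality is not closed under the sum of a polynomial and a shifted polynomial, real-rootedness is false for trees, and log-concavity for trees is itself open. But the step that would actually carry the argument --- constructing, for each tree, injections $\varphi_k$ between independent sets of consecutive sizes below the mode (and surjection-type maps above it), together with uniform control of how the mode of $I(T;t)$ sits relative to the modes of $I(T\setminus w;t)$ and of $t\,I(T\setminus N[w];t)$ --- is never carried out; you only state that it is ``the hard part'' where ``every prior attempt has stalled.'' That missing step is exactly what the conjecture asserts, so the proposal contains no new argument and cannot be repaired by filling in routine details. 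Note also that, in the context of this paper, nothing is lost: the authors never rely on the conjecture, and indeed they recall (Example following the conjecture) that unimodality can even fail for general bipartite graphs, which shows any proof must use tree structure in an essential way that your outline does not yet identify.
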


To our best knowledge, until now, the largest class of graphs for which the independence polynomial is known to be unimodal is the class of claw-free graphs \cite{Hamidoune90}. Recall that a graph is said to be \emph{claw-free} if it does not admit  the complete bipartite graph $K_{1,3}$ as an induced subgraph.  Conjecture~\ref{unimodalityconjecture} remains widely open. The following example due to Bhattacharyya and Kahn \cite{BK2013} shows that one cannot expect the statement of Conjecture~\ref{unimodalityconjecture} to be true for bipartite graphs.
\begin{Exem}
Given positive integers $m$ and $n>m$, let $G=(V,E)$ with $V=V_1\cup V_2\cup V_3$, where $V_1,V_2,V_3$ are disjoint; $|V_1|=n-m$ and $|V_2|=|V_3|=m;$ $E$ consists of the edges of the complete bipartite graph with the bipartition $V_1 \cup V_2$ and a perfect matching between $V_2$ and $V_3$. Then $G$ is a bipartite graph and for every $i\geq 0,\ s_i(G)
=(2^i-1)\binom{m}{i}+\binom{n}{i}.$ Therefore, for $m\geq 95$ and $n=\lfloor m\log_2(3)\rfloor$, $I(G;t)$ is not unimodal. As a consequence, $A(G)$ fails the WLP.
\end{Exem}
It is known that the Lefschetz properties depend strongly on the characteristic of the field.

\begin{Exem}
An empty graph is simply a graph with no edges. We denote the empty graph on $n$ vertices by $E_n$. Then
$$A(E_n)=R/(x_1^2,\ldots,x_n^2)\quad \text{and}\; I(E_n;t)=(1+t)^n.$$
A well-known result of Stanley  on complete intersections says that $A(E_n)$ has the SLP if $\Char(\kk)=0$ or $\Char(\kk)> n$ \cite{Stanley1980,HT2023}. This result does not hold if  $\Char(\kk)\le n$, even for the WLP. Indeed, in the case where $\Char(\kk)=2$, it was known that $A(E_n)$ has the WLP if and only if $n=3$ \cite{BK2011,KV2011}. In  \cite{KV2011} Kustin and Vraciu showed that if $ n\ge 5$ and $\Char(\kk)=p$ is odd, then $A(E_n)$ has the WLP if and only if $p\ge \lfloor\frac{n+3}{2}\rfloor$.
\end{Exem}
The complete graph on $n$ vertices, denoted by $K_n$, is the graph where each vertex is adjacent to every other. It follows that
$$A(K_n)=R/(x_1,\ldots,x_n)^2\quad \text{and}\; I(K_n;t)=1+nt.$$
It is easy to see that $A(K_n)$ has the SLP for any field $\kk$. Concerning disjoint unions of complete graphs, we have the following.
\begin{pro} {\rm \cite[Theorem 4.8]{MNS2020}}
Let $\Char(\kk)\neq 2$ and $A(G)$ be the artinian algebra associated to $G=\cup_{i=1}^r K_{n_i}$. Assume $n_1\geq n_2\geq \cdots\geq n_r\geq 1$. Then $A(G)$ has the WLP if and only if one of the following holds:
\begin{enumerate}
\item [\rm (1)] $n_2=\cdots =n_r=1$, i.e. $G$ is the disjoint union of a complete graph $K_{n_1}$ and an empty graph on $r-1$ vertices.
\item [\rm (2)] $n_3=\cdots =n_r=1$ and $r$ is odd.
\end{enumerate}
In particular for every $n\geq 2$,  if $G$ is the disjoint union of $n$ complete graphs, none of which is a singleton, then $A(G)$ does not have the WLP.
\end{pro}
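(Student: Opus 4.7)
The plan is to reduce the question to the tensor product structure of $A(G)$ and then apply Lemma~\ref{lem_tensor}. Disjointness of the vertex sets of the components $K_{n_i}$ yields the $\kk$-algebra isomorphism
$$A(G) \cong A(K_{n_1}) \otimes_\kk \cdots \otimes_\kk A(K_{n_r}),$$
with Hilbert series $\prod_{i=1}^{r} (1 + n_i t)$, and each factor $A(K_{n_i})$ has socle degree one (hence has the SLP trivially). The singleton factors combine into a complete intersection $A(E_s) = \kk[y_1,\ldots,y_s]/(y_1^2,\ldots,y_s^2)$, which has the SLP under the characteristic hypothesis by Stanley's theorem. By Proposition~\ref{Proposition2.5}, we may work with the linear form $\ell = x_1+\cdots+x_n$, which decomposes compatibly under the tensor product so that Lemma~\ref{lem_tensor} is directly applicable.

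For the failure direction, I would assume neither (1) nor (2) holds and treat two subcases. If $n_3 \geq 2$, so that at least three of the factors are ``big'' in the sense of being $A(K_{n_i})$ with $n_i \geq 2$, then each such factor has the map $\cdot \ell : [A(K_{n_i})]_0 \to [A(K_{n_i})]_1$ non-surjective (source of dimension one, target of dimension $n_i \geq 2$). Iterated application of Lemma~\ref{lem_tensor}(a) propagates this non-surjectivity to the tensor $B$ of these three factors in degrees two to three. Depending on the sizes of the $n_i$, the Hilbert function of $B$ at this step may require surjectivity (if the function has already peaked) or injectivity (if still rising) for WLP; the first case is immediate, while the second is handled by the dual argument using Lemma~\ref{lem_tensor}(b), exploiting that $\cdot \ell$ on $A(K_{n_i})$ also fails to be injective in its top degree. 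The subcase $n_1,n_2 \geq 2$, $n_3 = \cdots = n_r = 1$ with $r$ even is treated analogously, applying Lemma~\ref{lem_tensor} to $A(K_{n_1}) \otimes A(K_{n_2})$ tensored with $A(E_{r-2})$; here the parity of $r-2$ places the peak of the Hilbert function of $A(E_{r-2})$ at an integer degree that aligns with a position where the lemma forces non-maximal rank in $A(G)$.

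For the sufficiency direction, in case (1) we have $A(G) \cong A(K_{n_1}) \otimes A(E_{r-1})$. Since $A(K_{n_1})$ has socle degree one, multiplication by $\ell$ on $A(G)$ splits as a block operator indexed by $[A(K_{n_1})]_0$ and $[A(K_{n_1})]_1$; the SLP of $A(E_{r-1})$ then gives maximal rank on each block, yielding the WLP of $A(G)$. In case (2) with $r$ odd, $A(G) \cong A(K_{n_1}) \otimes A(K_{n_2}) \otimes A(E_{r-2})$, and the oddness of $r - 2$ places the unique peak of the Hilbert function of $A(E_{r-2})$ at a half-integer, matching the shape of $1 + (n_1+n_2)t + n_1 n_2 t^2$ in just the right way that a Lefschetz element for $A(E_{r-2})$ lifts to one for $A(G)$. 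The main obstacle will be this last step: the parity of $r$ is essential to avoid a ``bad degree'' where Lemma~\ref{lem_tensor}(a) would otherwise produce a failure, and verifying that a Lefschetz element indeed exists requires careful bookkeeping of the multiplication-by-$\ell$ maps across the tensor product, possibly via an explicit inverse systems calculation as in \cite{MNS2020}.
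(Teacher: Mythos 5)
You should first note that the paper itself offers no proof of this proposition: it is quoted verbatim from \cite{MNS2020} (Theorem 4.8), so the only comparison available is with that source, not with an internal argument. Judged on its own, your outline has the correct starting point (the factorization $A(G)\cong A(K_{n_1})\otimes_\kk\cdots\otimes_\kk A(K_{n_r})$, the Hilbert series $\prod_i(1+n_it)$, and the reduction to $\ell=x_1+\cdots+x_n$ via Proposition~\ref{Proposition2.5}), but both directions contain genuine gaps. For the failure direction, Lemma~\ref{lem_tensor} only produces a failure of surjectivity (or injectivity) in one specific degree; to conclude failure of the WLP you must also verify that in that very degree the Hilbert function of the whole ring $A(G)$ demands the opposite behavior, and this degree bookkeeping --- which is exactly where the location of the mode of $\prod_i(1+n_it)$ and the parity of $r$ enter --- is what your sketch postpones (``depending on the sizes of the $n_i$\ldots''). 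Moreover, the ingredients you name do not supply it: the only injectivity failures of a single factor $A(K_{n_i})$ occur at its socle degree $1$, where the target is zero, and propagating such trivial failures through Lemma~\ref{lem_tensor}(b) lands you at the socle of the corresponding subproduct, where non-injectivity says nothing about the WLP. A usable kernel element has to be produced by hand --- for instance $(x_1+\cdots+x_{n_1})-(y_1+\cdots+y_{n_2})$ is killed by $\ell$ in degree $1$ of $A(K_{n_1}\cup K_{n_2})$ because $X^2=Y^2=0$ there --- and even then the degree of the resulting failure, after tensoring with the remaining factors (including $A(E_s)$ coming from the singletons), must be matched against the peak of the Hilbert series; none of this is carried out, nor is the passage from a failure in the subproduct of the ``big'' factors to a failure in all of $A(G)$.

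The sufficiency direction is the larger gap: Lemma~\ref{lem_tensor} can only destroy the WLP, never establish it, so cases (1) and (2) need an independent argument, and the one you sketch is incomplete. In case (1) the map $\cdot\ell$ on $A(K_{n_1})\otimes_\kk A(E_{r-1})$ is indeed block triangular, with diagonal blocks given by $\cdot\ell''$ on $A(E_{r-1})$ in two consecutive degrees, but near the peak of the binomial Hilbert function one diagonal block is only injective while the other is only surjective, and maximal rank of the full map is not a formal consequence of block triangularity; that transition degree is precisely where the real work lies, and in case (2) you explicitly defer to the inverse-system computation of \cite{MNS2020}, i.e.\ to the result being proved. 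Finally, your appeal to Stanley's theorem for the SLP of $A(E_s)$ requires $\Char(\kk)=0$ or $\Char(\kk)>s$, not merely $\Char(\kk)\neq 2$; as the paper's own example via Kustin--Vraciu records, $A(E_n)$ fails the WLP in small odd characteristic, so the characteristic hypothesis cannot be used the way your sketch does and would have to be confronted directly (or the hypothesis of the statement sharpened) before the positive cases can be closed.
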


\section{Independence polynomial of some graphs}\label{Section3}
In this section, we provide some results on the independence polynomial of some familiar graphs, namely paths, cycles, and pan graphs. These results will be useful to prove our main theorems in the next section.
\subsection{Paths}
Let $P_n$ be the path on $n$ vertices ($n\ge 1$).
\begin{figure}[htp]
	\centering
	\begin{tikzpicture}
	[
every edge/.style = {draw=black},
vrtx/.style args = {#1/#2}{%
	circle, draw, fill=black,
	minimum size=1mm, label=#1:#2}
]
		\node (n1) [vrtx=below/1] at (-5,0) {};
		\node (n2) [vrtx=below/2]at (-3,0)  {};
		\node (n3) [vrtx=below/3]at (-1,0)  {};
		\node (n4) [vrtx=below/4]at (1,0) {};
		\node (n5) [vrtx=below/5]at (3,0)  {};
		\node (n6) [vrtx=below/6]at (5,0)  {};
		\foreach \from/\to in {n1/n2,n2/n3,n3/n4,n4/n5,n5/n6}
		\draw (\from) -- (\to);	
	\end{tikzpicture}
	\caption{The path $P_6$}
\end{figure}

\begin{pro}\label{IndependencePolynomialofPath}
	The independence polynomial of $P_n$ is
	\[
	I(P_n;t)=\sum_{i=0}^{\lfloor \frac{n+1}{2}\rfloor} \binom{n+1-i}{i} t^i.
	\]
	Moreover, for every $n\ge 1$, $I(P_n;t)$ is unimodal, with the mode 
	$$
	\lambda_n=\left\lceil \dfrac{5n+2-\sqrt{5n^2+20n+24}}{10} \right \rceil.
	$$ 
\end{pro}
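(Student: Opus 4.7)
The plan splits into two stages: first derive the closed formula for $I(P_n;t)$, then analyze the coefficient sequence $s_i(P_n) = \binom{n+1-i}{i}$ to establish unimodality and the mode.

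For the formula, I apply Proposition~\ref{FormulaforIndependence}(i) with $w$ an endpoint of $P_n$. This gives $P_n\setminus w = P_{n-1}$ and $P_n\setminus N[w] = P_{n-2}$, whence the linear recursion
\[
I(P_n;t) = I(P_{n-1};t) + t\cdot I(P_{n-2};t),
\]
with base cases $I(P_0;t)=1$ and $I(P_1;t) = 1+t$. Combined with the Pascal identity $\binom{n+1-i}{i} = \binom{n-i}{i} + \binom{n-i}{i-1}$, a straightforward induction on $n$ then yields the closed formula.

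For unimodality and the mode, I study the consecutive ratio
\[
\frac{s_{i+1}}{s_i} = \frac{(n+1-2i)(n-2i)}{(i+1)(n+1-i)},
\]
which follows from a direct simplification of binomial coefficients. After clearing denominators, the inequality $s_{i+1} \ge s_i$ is equivalent to the quadratic condition
\[
q(i) := 5i^2 - (5n+2)i + (n^2-1) \ge 0.
\]
A short discriminant computation gives the two real roots
\[
r_\pm = \frac{5n+2 \pm \sqrt{5n^2+20n+24}}{10},
\]
so $s_{i+1} < s_i$ holds precisely for $r_- < i < r_+$, since $q$ opens upward. The easy estimate $\lfloor(n+1)/2\rfloor - 1 \le r_+$ reduces, after multiplication by $10$, to the trivial inequality $-7 \le \sqrt{5n^2+20n+24}$; thus the entire decreasing portion of $(s_i)$ starting at $\lceil r_- \rceil$ is already captured by $q \le 0$. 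Consequently the sequence is strictly increasing for $i < \lceil r_- \rceil$ and non-increasing thereafter, and the strict inequality $s_{\lambda_n - 1} < s_{\lambda_n}$ required when $\lambda_n \ge 1$ follows at once from $\lceil x \rceil - 1 < x$ for every real $x$. This gives unimodality and identifies the mode as $\lambda_n = \lceil r_- \rceil$, matching the stated formula.

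The only substantive step is the algebraic manipulation converting the ratio inequality into $q(i)\ge 0$; it is elementary but error-prone, so it is the main place to concentrate effort. Everything else is routine bookkeeping.
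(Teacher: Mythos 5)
Your argument is correct, and its core — the equivalence of $s_{i+1}\ge s_i$ with the quadratic condition $5i^2-(5n+2)i+(n^2-1)\ge 0$, whose roots are $\frac{5n+2\pm\sqrt{5n^2+20n+24}}{10}$ — is exactly the computation the paper performs to locate the mode, so your $\lambda_n=\lceil r_-\rceil$ agrees with Proposition~\ref{IndependencePolynomialofPath}. Where you genuinely diverge is in how the two auxiliary facts are obtained. For the closed formula, the paper cites Hopkins--Staton's identification $I(P_n;t)=F_{n+1}(t)$ with the Fibonacci polynomials, whereas you rederive the same recurrence $I(P_n;t)=I(P_{n-1};t)+t\,I(P_{n-2};t)$ from Proposition~\ref{FormulaforIndependence}(i) applied at an endpoint and conclude by induction with Pascal's rule; both work, yours being self-contained. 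For unimodality, the paper invokes Hamidoune's theorem on claw-free graphs, while you read unimodality off the same ratio analysis: the bound $\lfloor(n+1)/2\rfloor-1\le r_+$ guarantees the upper root never interferes on the relevant index range, so the coefficients increase strictly below $\lceil r_-\rceil$ and are non-increasing afterwards. This is slightly more economical, since once the sign of the quadratic is controlled on the whole range the claw-free citation is redundant; the price is that you must be a little careful at the top of the range, where for $i\ge n/2$ one has $s_{i+1}=\binom{n-i}{i+1}=0$ and the ratio formula must be read with that convention (the paper disposes of this with ``clearly true if $i\ge n/2$''); your estimate $i\le r_+$ covers it, so nothing breaks, but it deserves a sentence. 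Your handling of the strictness $s_{\lambda_n-1}<s_{\lambda_n}$ via $\lceil x\rceil-1<x$ matches the paper's definition of the mode and is fine.
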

\begin{proof}
Hopkins and Staton \cite{GL84} showed that
$$I(P_n; t) = F_{n+1}(t),$$
where $F_n(t),\ n\geq 0$, are the so-called Fibonacci polynomials, which are defined recursively by
$$F_0(t) = 1; F_1(t)= 1; F_n(t) = F_{n-1}(t) + tF_{n-2}(t).$$
Based on this recurrence, one can deduce that
\[
I(P_n;t)=\sum_{i=0}^{\lfloor \frac{n+1}{2}\rfloor} \binom{n+1-i}{i} t^i.
\]
The unimodality of the independence polynomial of $P_n$ is implied from the fact that the independence polynomial of a claw-free graph  is unimodal \cite{Hamidoune90}. Now we determine the mode of $I(P_n;t)$.
Let $i$ be an integer such that $0\le i\le \lfloor \frac{n+1}{2}\rfloor$ and  
\[
\binom{n+1-i}{i}\ge \binom{n-i}{i+1}. 
\]
This is clearly true if $i\ge n/2$. If $i< n/2$, we have
\begin{align*}
&\binom{n+1-i}{i}\ge \binom{n-i}{i+1}\\
&\Leftrightarrow\frac{n+1-i}{(n-2i)(n-2i+1)}\ge \frac{1}{i+1}\\
&\Leftrightarrow 5i^2-(5n+2)i+n^2-1\le 0\\
&\Leftrightarrow \frac{5n+2-\sqrt{5n^2+20n+24}}{10}\le i\le \frac{5n+2+\sqrt{5n^2+20n+24}}{10}.
\end{align*}
As the inequality on the right holds for any $i\le \lfloor \frac{n+1}{2}\rfloor$, we have
\[
\binom{n+1-i}{i}\ge \binom{n-i}{i+1} \Leftrightarrow  i\ge \frac{5n+2-\sqrt{5n^2+20n+24}}{10}.
\]
This means that the mode of $I(P_n;t)$ is equal to $\lambda_n=\lceil\frac{5n+2-\sqrt{5n^2+20n+24}}{10}\rceil.$
\end{proof}
We summarize the important properties of the mode of $I(P_n;t)$.
\begin{Lem}\label{lem_compare_lambda}
For any $n\geq 1$, one has the following.
\begin{enumerate}
\item [\rm (i)]  $\lambda_{n+1}\geq \lambda_n;$
\item [\rm (ii)] $\lambda_{n+3}-1\leq \lambda_n\leq \lambda_{n+4}-1;$
\item [\rm (iii)]  $\lambda_{n+11}\geq \lambda_n+3.$
\end{enumerate} 
\end{Lem}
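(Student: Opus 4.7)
The plan is to analyze $\lambda_n = \lceil f(n)\rceil$, where
\[
f(n) = \frac{5n+2-\sqrt{5n^2+20n+24}}{10},
\]
through uniform pointwise bounds on the derivative
\[
f'(n) = \tfrac{1}{2}\bigl(1 - g(n)\bigr), \qquad g(n) = \frac{n+2}{\sqrt{5n^2+20n+24}}.
\]
Squaring $g$, the three comparisons $(n+2)^2 < 5n^2+20n+24$, $\;9(n+2)^2 > 5n^2+20n+24$, and $\;5(n+2)^2 < 5n^2+20n+24$ reduce respectively to $4n^2+16n+20 > 0$, $(n+1)(n+3)>0$, and $20<24$, all evidently true for $n\geq 1$. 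Translated back, these yield the uniform bounds
\[
0 < f'(n) < \tfrac{1}{3}\qquad\text{and}\qquad f'(n) > \tfrac{5-\sqrt{5}}{10}.
\]

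Part (i) is then immediate from the (strict) monotonicity of $f$. For parts (ii) and (iii) I would combine the slope bounds with the elementary observation that $\lceil x\rceil \in [x, x+1)$, which gives $|\lceil x\rceil - \lceil y\rceil - (x-y)| < 1$. Integrating the two-sided slope bound on $[n, n+k]$ then yields three key estimates. For $k=3$: $f(n+3)-f(n) < 1$, so $\lceil f(n+3)\rceil - \lceil f(n)\rceil < 2$ is an integer, hence $\leq 1$, which is the first half of (ii). For $k=4$: $f(n+4)-f(n) > 2 - 2/\sqrt{5} > 1$ (equivalently $\sqrt{5}>2$), so $\lceil f(n+4)\rceil - \lceil f(n)\rceil$ is a positive integer, hence $\geq 1$, giving the second half of (ii). For $k=11$: $f(n+11)-f(n) > (55-11\sqrt{5})/10 > 3$ (equivalently $625>605$), so $\lceil f(n+11)\rceil - \lceil f(n)\rceil > 2$ is an integer $\geq 3$, yielding (iii).

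The only delicate point is that the slope bounds $1/3$ and $(5-\sqrt{5})/10$ are just barely sharp enough for the window widths $3,4,11$: the case $k=3$ rests on the identity $3\cdot\tfrac13=1$, the case $k=4$ on $\sqrt{5}>2$, and the case $k=11$ on $625 > 605$. Any cruder uniform slope estimate would fail at least one of these, so the main (minor) obstacle is really just to identify the two tight constants — the natural uniform upper bound $1/3$ (in fact the supremum is $f'(0)=\tfrac{1}{2}(1-1/\sqrt{6})$) and the asymptotic lower bound $\tfrac{5-\sqrt{5}}{10}=\lim_{n\to\infty}f'(n)$ — and then carefully pass from the strict continuous inequalities on $f(n+k)-f(n)$ to the integer-valued conclusions on $\lceil f(n+k)\rceil-\lceil f(n)\rceil$ via $\lceil x\rceil\in[x,x+1)$.
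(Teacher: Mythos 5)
Your proposal is correct, and it reaches the paper's conclusion by a somewhat different route. The paper's proof has the same two-layer skeleton — first establish the three inequalities for the real numbers $\alpha_n=\frac{5n+2-\sqrt{5n^2+20n+24}}{10}$ (namely $\alpha_{n+1}\ge\alpha_n$, $\alpha_{n+3}-1\le\alpha_n\le\alpha_{n+4}-1$, $\alpha_{n+11}\ge\alpha_n+3$), then transfer to $\lambda_n=\lceil\alpha_n\rceil$ by basic ceiling properties — but it obtains the numeric inequalities by a ``straightforward computation,'' i.e.\ direct algebraic manipulation of each shifted inequality with the square roots, in the style of the proofs of Lemmas \ref{lem_compare_rho_lambda} and \ref{lem_compare_lambda_chi}, and then transfers via monotonicity of $\lceil\cdot\rceil$ and $\lceil x+m\rceil=\lceil x\rceil+m$ for integer $m$. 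You instead prove once the two-sided slope bound $\frac{5-\sqrt5}{10}<f'(x)<\frac13$ (each reduction after squaring is valid for all real $x\ge 1$, as needed to apply the mean value theorem on $[n,n+k]$), read off the increments $f(n+k)-f(n)$ for $k=3,4,11$ simultaneously, and pass to ceilings via the slack estimate $|\lceil x\rceil-\lceil y\rceil-(x-y)|<1$ combined with integrality. Both arguments are complete; yours is more uniform — two constants handle all three parts and make transparent why the window widths $3$, $4$, $11$ are exactly right — while the paper's case-by-case algebra yields the shifted inequalities in the exact form needed for an immediate ceiling transfer. One harmless slip in your aside: on the relevant range $n\ge 1$ the supremum of $f'$ is $f'(1)=2/7$, not $f'(0)=\tfrac12\bigl(1-1/\sqrt6\bigr)$ (that is the supremum over $[0,\infty)$); this affects nothing, since you only use the bound $f'<1/3$.
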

\begin{proof}
Set $\alpha_n=\frac{5n+2-\sqrt{5n^2+20n+24}}{10}.$ A straightforward computation shows that 
\begin{align*}
\alpha_{n+1}\geq \alpha_n;\; 
\alpha_{n+3}-1\leq \alpha_n\leq \alpha_{n+4}-1 \; \text{and}\;\alpha_{n+11}\geq \alpha_n+3.
\end{align*}
The lemma follows from basic properties of the ceiling function.
\end{proof}

Table \ref{tab_indpoly} provides information about the initial values of the mode of  the independence polynomial $I(G;t)$ for the classes of graphs considered in this paper, by using {\tt Macaulay2} \cite{M2codes}. A dash indicates an undefined value. 
\begin{table}[ht!]
\caption{Graphs and modes of their independence polynomials}
\label{tab_indpoly}
\begin{tabular}{| c  | c  | c | c | c | c | c | c | c |  c | c |  c  |  c  | c  | c | c |  }
\hline
$G$ & \diagbox{\text{mode}\\ \text{of $I(G;t)$}}{\\ \\$n$}  & 1 & 2 & 3 & 4 & 5 & 6 & 7 & 8 & 9 & 10 & 11 & 12 & 13 \\
\hline
$P_n$ & $\lambda_n$        &     0  & 1  & 1  & 1  & 2  & 2 & 2 & 2 & 3  & 3 & 3 & 4 & 4\\
\hline
$C_n$ & $\rho_n$          &     -  & -  & 1  & 1  & 1  & 2 & 2 & 2 & 3 & 3 &  3 & 3 & 4 \\
\hline
$\CAE_n$ & $\chi_n$          &     -   & -  & -  & 1  & 1  & 2  & 2  & 2 & 2 & 3 & 3  & 3 & 4 \\
\hline
$\Pan_n$ & $\zeta_n$              & -   & -  &  1 & 1  & 2  & 2  & 2 & 3  & 3 & 3 & 3  & 4 & 4\\
\hline
\end{tabular}
\end{table}

\subsection{Cycles}
Let $C_n$ be the cycle on $n$ vertices ($n\geq 3$). 
\begin{figure}[!h]
	\centering
	\begin{tikzpicture}
	[
	every edge/.style = {draw=black,very thick},
	vrtx/.style args = {#1/#2}{%
		circle, draw, thick, fill=black,
		minimum size=1mm, label=#1:#2}
	]
		\node (n1) [vrtx=above/2] at (-1,0) {};
		\node (n2) [vrtx=above/1] at (1,0)  {};
		\node (n3) [vrtx=right/6]at (2,-1.5)  {};
		\node (n4) [vrtx=below/5]at (1,-3) {};
		\node (n5) [vrtx=below/4]at (-1,-3)  {};
		\node (n6) [vrtx=left/3]at (-2,-1.5)  {};
		\foreach \from/\to in {n1/n2,n2/n3,n3/n4,n4/n5,n5/n6,n6/n1}
		\draw (\from) -- (\to);	
	\end{tikzpicture}
	\caption{The cycle $C_6$}
\end{figure}
\begin{pro}\label{Prop_independenceofCn}
The independence polynomial of $C_n$ is
	\begin{align*}
		I(C_n;t)&=I(P_{n-1};t)+tI(P_{n-3};t)\\
		&=1+\sum_{i=1}^{\lfloor \frac{n}{2}\rfloor} \frac{n}{i}\binom{n-i-1}{i-1} t^i.
	\end{align*}
	Moreover, $I(C_n;t)$ is unimodal, with the mode $\rho_n=\lceil\frac{5n-4-\sqrt{5n^2-4}}{10}\rceil$ for all $n\ge 3$.
\end{pro}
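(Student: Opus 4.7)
The plan decomposes into three parts: establishing the two equalities, proving unimodality, and computing the mode. For the first equality $I(C_n;t)=I(P_{n-1};t)+tI(P_{n-3};t)$, I would fix any vertex $w$ of $C_n$ and apply Proposition \ref{FormulaforIndependence}(i): deletion gives $C_n\setminus w\cong P_{n-1}$, and since $N[w]$ consists of $w$ together with its two neighbors, $C_n\setminus N[w]\cong P_{n-3}$. The degenerate case $n=3$ is handled either by the convention $I(P_0;t)=1$ or by the direct check $I(C_3;t)=1+3t$. For the second equality, substitute the closed form $I(P_m;t)=\sum_i\binom{m+1-i}{i}t^i$ from Proposition \ref{IndependencePolynomialofPath} to obtain
\[
I(C_n;t)=1+\sum_{i\ge 1}\left[\binom{n-i}{i}+\binom{n-1-i}{i-1}\right]t^i,
\]
and simplify the bracket through the identity $\binom{n-i}{i}+\binom{n-1-i}{i-1}=\binom{n-1-i}{i-1}\big(\tfrac{n-i}{i}+1\big)=\tfrac{n}{i}\binom{n-i-1}{i-1}$.

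Unimodality follows immediately from Hamidoune's theorem on claw-free graphs \cite{Hamidoune90}: the cycle $C_n$ has maximum degree $2$, so it cannot contain $K_{1,3}$ as an induced subgraph.

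To determine the mode, denote by $c_i$ the coefficient of $t^i$ in the closed form above. Since $c_0=1<n=c_1$ for $n\ge 3$, the mode is at least $1$. For $i\ge 1$, a direct calculation yields
\[
\frac{c_{i+1}}{c_i}=\frac{(n-2i)(n-2i-1)}{(i+1)(n-i-1)},
\]
so the condition $c_i\ge c_{i+1}$ is equivalent to $5i^2-(5n-4)i+(n-1)^2\le 0$. The discriminant of this quadratic is $(5n-4)^2-20(n-1)^2=5n^2-4$, producing roots $\tfrac{5n-4\mp\sqrt{5n^2-4}}{10}$. A short verification shows that the upper root always exceeds $\lfloor n/2\rfloor$, so by unimodality the mode is the smallest integer $i\ge 1$ above the lower root, namely $\rho_n=\lceil(5n-4-\sqrt{5n^2-4})/10\rceil$.

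The main obstacle is essentially bookkeeping: the binomial identity that merges the two formulations must be derived cleanly, and one should check that $\rho_n$ lies in the admissible range $[1,\lfloor n/2\rfloor]$ for all $n\ge 3$ (which can be cross-verified against Table \ref{tab_indpoly} for small $n$). The algebraic core—the deletion step, the binomial identity, and the quadratic inequality—is otherwise routine and closely parallels the argument already given in Proposition \ref{IndependencePolynomialofPath} for paths.
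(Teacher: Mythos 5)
Your proof is correct and takes essentially the same approach as the paper: unimodality via Hamidoune's claw-free theorem, and the mode via the equivalence of $c_i\ge c_{i+1}$ with the quadratic inequality $5i^2-(5n-4)i+(n-1)^2\le 0$, whose roots are $\frac{5n-4\pm\sqrt{5n^2-4}}{10}$, together with the observation that the upper root lies beyond the degree range. The only (cosmetic) difference is that you derive the first equality and the closed form from Proposition~\ref{IndependencePolynomialofPath} via vertex deletion and the identity $\binom{n-i}{i}+\binom{n-i-1}{i-1}=\frac{n}{i}\binom{n-i-1}{i-1}$, whereas the paper simply cites Hopkins--Staton, so your write-up is slightly more self-contained.
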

\begin{proof}
Hopkins and Staton \cite{GL84} showed that
$$I(C_n; t) =1+\sum_{i=1}^{\lfloor \frac{n}{2}\rfloor} \frac{n}{i}\binom{n-i-1}{i-1} t^i.$$
The unimodality of the independence polynomial of $C_n$ is implied from the fact that the independence polynomial of a claw-free graph is unimodal \cite{Hamidoune90}. Arguing as in the proof of Proposition~\ref{IndependencePolynomialofPath}, solving for $1\le i \le \lceil \frac{n}{2} \rceil-1$ from 
\[
\frac{n}{i}\binom{n-i-1}{i-1} \ge \frac{n}{i+1}\binom{n-i-2}{i}
\]
we get
\begin{align*}
 (i+1)(n-i-1)\ge (n-2i-1)(n-2i).
\end{align*}
Equivalently
$5i^2-i(5n-4)+n^2-2n+1\le 0$. Thus
\[
\frac{5n-4-\sqrt{5n^2-4}}{10} \le i\le \frac{5n-4+\sqrt{5n^2-4}}{10}.
\]
This implies that the mode of $I(C_n;t)$ is equal to $\rho_n=\left\lceil\frac{5n-4-\sqrt{5n^2-4}}{10}\right\rceil$, as desired.
\end{proof}
\begin{Lem}
	\label{lem_compare_rho_lambda}
	For all $n\ge 5$, there are inequalities
	$\lambda_{n-1}\leq \rho_n\leq \lambda_{n-4}+1 \leq \lambda_n$.
\end{Lem}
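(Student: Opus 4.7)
The plan is to separate the chain into three one-step inequalities. The rightmost one, $\lambda_{n-4}+1\le \lambda_n$, is a direct consequence of Lemma~\ref{lem_compare_lambda}(ii) applied with $n$ replaced by $n-4$, so there is nothing new to do there. For the other two inequalities I would return to the real-valued quantities
\[
\alpha_n=\frac{5n+2-\sqrt{5n^2+20n+24}}{10},\qquad \beta_n=\frac{5n-4-\sqrt{5n^2-4}}{10},
\]
whose ceilings are, by Propositions~\ref{IndependencePolynomialofPath} and~\ref{Prop_independenceofCn}, equal to $\lambda_n$ and $\rho_n$ respectively. Since $\lceil\cdot\rceil$ is monotone and commutes with adding an integer, it suffices to prove the two real inequalities
\[
\alpha_{n-1}\le \beta_n\quad\text{and}\quad \beta_n\le \alpha_{n-4}+1\qquad(n\ge 5).
\]

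For the first, substituting and clearing denominators turns $\alpha_{n-1}\le\beta_n$ into
\[
1+\sqrt{5n^2-4}\le \sqrt{5n^2+10n+9}.
\]
Both sides are positive, so squaring and simplifying reduces this to $\sqrt{5n^2-4}\le 5n+6$; squaring once more (both sides positive) gives $0\le 20n^2+60n+40$, which is trivially true.

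For the second, after substituting $\alpha_{n-4}=\bigl(5n-18-\sqrt{5n^2-20n+24}\bigr)/10$ (computed from $5(n-4)^2+20(n-4)+24=5n^2-20n+24$), the inequality $\beta_n\le\alpha_{n-4}+1$ becomes
\[
\sqrt{5n^2-20n+24}\le \sqrt{5n^2-4}-4.
\]
Before squaring I would check that the right-hand side is nonnegative, i.e.\ $\sqrt{5n^2-4}\ge 4$, which holds for $n\ge 2$. Squaring reduces the inequality to $2\sqrt{5n^2-4}\le 5n-3$, and squaring again (noting $5n-3\ge 0$) collapses everything to $0\le 5(n-1)(n-5)$, which is precisely the hypothesis $n\ge 5$.

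The only delicate point is the bookkeeping for the two rounds of squaring in the second inequality—one must verify the nonnegativity conditions so that the implications are bi-directional—but all of that is immediate for $n\ge 5$, so this is more of a careful computation than a real obstacle.
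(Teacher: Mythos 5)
Your proposal is correct and follows essentially the same route as the paper: deduce $\lambda_{n-4}+1\le\lambda_n$ from Lemma~\ref{lem_compare_lambda}, and verify $\lambda_{n-1}\le\rho_n\le\lambda_{n-4}+1$ by comparing the real quantities $\alpha_{n-1},\beta_n,\alpha_{n-4}+1$ through the same chain of squarings. The only (harmless) difference is that in the second inequality you isolate the radical $\sqrt{5n^2-4}$ instead of $\sqrt{5n^2-20n+24}$, but both reductions end at the identical condition $5(n-1)(n-5)\ge 0$.
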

\begin{proof}
	By Lemma \ref{lem_compare_lambda}, $\lambda_{n-4}+1 \leq \lambda_n$, hence it suffices to show that
	\[
	\lambda_{n-1}\leq \rho_n\leq \lambda_{n-4}+1.
	\]
	For the inequality on the left, we have to show that
	\begin{align*}
		&\dfrac{5(n-1)+2-\sqrt{5(n-1)^2+20(n-1)+24}}{10} \le \dfrac{5n-4-\sqrt{5n^2-4}}{10}\\
		&\Leftrightarrow 5n-3-\sqrt{5n^2+10n+9} \le 5n-4-\sqrt{5n^2-4}\\
		& \Leftrightarrow \sqrt{5n^2-4}+1 \le \sqrt{5n^2+10n+9} \\
		& \Leftrightarrow 5n^2-3+2\sqrt{5n^2-4} \le 5n^2+10n+9\\
		& \Leftrightarrow \sqrt{5n^2-4} \le 5n+6 \Leftrightarrow (5n+6)^2-(5n^2-4) \ge 0\\
		& \Leftrightarrow 20n^2+60n+40 \ge 0,
	\end{align*}
	which is clear.
	
		For the inequality on the right, we have to show that
	\begin{align*}
		&\dfrac{5n-4-\sqrt{5n^2-4}}{10} \le \dfrac{5(n-4)+2-\sqrt{5(n-4)^2+20(n-4)+24}}{10}+1 \\
		&\Leftrightarrow 5n-4-\sqrt{5n^2-4} \le 5n-8-\sqrt{5n^2-20n+24}\\
		& \Leftrightarrow \sqrt{5n^2-20n+24}+4 \le \sqrt{5n^2-4} \\
		& \Leftrightarrow 5n^2-20n+24+16+8\sqrt{5n^2-20n+24} \le 5n^2-4 \quad (\text{by squaring})\\
		& \Leftrightarrow 8\sqrt{5n^2-20n+24} \le 20n-44 \\
		& \Leftrightarrow 2\sqrt{5n^2-20n+24} \le 5n-11\\
		& \Leftrightarrow 4(5n^2-20n+24) \le (5n-11)^2 \\
		& \Leftrightarrow 5n^2-30n+25 \ge 0  \Leftrightarrow 5(n-1)(n-5)\ge 0
	\end{align*}
	which is true for all $n\ge 5$. The proof is completed.
\end{proof}

\subsection{Pans}

The $n$-pan graph is the graph obtained by joining a cycle graph $C_n$ to a singleton graph $K_1$ with a bridge.  We denote this graph by $\Pan_n$. 

Our goal is to show the independence polynomial of $\Pan_n$ is unimodal. For this, we consider a family of graphs formed by adding an edge $\{n-2,n\}$ to the cycles $C_n$ ($n\geq 4$). We denote this graph by $\CAE_n$. 

\begin{figure}[!h]
	\begin{tabular}{cc}
\begin{tikzpicture}[
	every edge/.style = {draw=black,very thick},
	vrtx/.style args = {#1/#2}{%
		circle, draw, thick, fill=black,
		minimum size=1mm, label=#1:#2}
		]
		\node (n1) [vrtx=above/2]  at (-1,0) {};
		\node (n2) [vrtx=above/1]at (1,0)  {};
		\node (n3) [vrtx=below/6]at (2,-1.5)  {};
		\node (n4) [vrtx=below/5]at (1,-3) {};
		\node (n5) [vrtx=below/4]at (-1,-3)  {};
		\node (n6) [vrtx=left/3]at (-2,-1.5)  {};
		\node (n7) [vrtx=below/7]at (4,-1.5)  {};
		\foreach \from/\to in {n1/n2,n2/n3,n3/n4,n4/n5,n5/n6,n6/n1, n3/n7}		
		\draw (\from) -- (\to);	
	\end{tikzpicture} &
	\begin{tikzpicture}[
		every edge/.style = {draw=black,very thick},
		vrtx/.style args = {#1/#2}{%
			circle, draw, thick, fill=black,
			minimum size=1mm, label=#1:#2}
		]
		\node (n1) [vrtx=above/2]  at (-1,0) {};
		\node (n2) [vrtx=above/1]at (1,0)  {};
		\node (n3) [vrtx=right/6]at (2,-1.5)  {};
		\node (n4) [vrtx=below/5]at (1,-3) {};
		\node (n5) [vrtx=below/4]at (-1,-3)  {};
		\node (n6) [vrtx=left/3]at (-2,-1.5)  {};
		\foreach \from/\to in {n1/n2,n2/n3,n3/n4,n4/n5,n5/n6,n6/n1, n3/n5}
		\draw (\from) -- (\to);	
	\end{tikzpicture}
\end{tabular}
	\caption{$\Pan_6$ and $\CAE_6$}
\end{figure}

Note that $\CAE_n$ is a claw-free graph, and hence its independence polynomial is unimodal \cite{Hamidoune90}. 
\begin{Lem}\label{lem_compare_lambda_chi}
The independence polynomial of $\CAE_n$ is
\begin{align*}
	I(\CAE_n;t)&=\sum_{i=0}^{\alpha(\CAE_n)} s_i(\CAE_n)t^i\\
	&=I(P_{n-1};t)+tI(P_{n-4};t)\\
	&=\sum_{i=0}^{\lfloor\frac{n}{2} \rfloor}\bigg[\binom{n-i}{i}+\binom{n-i-2}{i-1}\bigg]t^i.
\end{align*}
Let $\chi_n$ be the mode of $I(\CAE_n;t)$ and $\lambda_n$ be the mode of $I(P_n;t)$ as in Proposition \ref{IndependencePolynomialofPath}. For any $n\geq 5$, one has
	$\lambda_{n-1}\leq \chi_n\leq \lambda_{n-4}+1.$
\end{Lem}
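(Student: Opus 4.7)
The plan is (i) to derive $I(\CAE_n;t) = I(P_{n-1};t) + t\,I(P_{n-4};t)$ via vertex deletion, and (ii) to get the mode bounds by combining strict-increase and weak-decrease information for the two summands at the modes $\lambda_{n-1}$ and $\lambda_{n-4}$ of $I(P_{n-1};t)$ and $I(P_{n-4};t)$.

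For (i), I apply Proposition~\ref{FormulaforIndependence}(i) at the vertex $w=n$. In $\CAE_n$, vertex $n$ is incident to $\{n-1,n\}$, $\{n,1\}$, $\{n-2,n\}$, so $N[n]=\{1,n-2,n-1,n\}$. Deletion leaves $\CAE_n\setminus n$ as the path on $\{1,\dots,n-1\}$, i.e.\ $P_{n-1}$, and $\CAE_n\setminus N[n]$ as the path on $\{2,\dots,n-3\}$, i.e.\ $P_{n-4}$, giving the functional identity. The stated binomial expression follows by plugging in Proposition~\ref{IndependencePolynomialofPath} and reindexing the second sum by $j=i-1$.

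For (ii), set $s_i := s_i(\CAE_n)$, so that $s_i = s_i(P_{n-1}) + s_{i-1}(P_{n-4})$, with $s_{-1}(P_{n-4})=0$. The preliminary inequality $\lambda_{n-1} \leq \lambda_{n-4}+1$ follows from Lemma~\ref{lem_compare_lambda}(ii) applied with $n$ replaced by $n-4$. To show $\chi_n \geq \lambda_{n-1}$, I compare $s_{\lambda_{n-1}}$ and $s_{\lambda_{n-1}-1}$ termwise: the first summand strictly increases there by the definition of the mode of $I(P_{n-1};t)$, and the second summand $s_{i-1}(P_{n-4})$ is non-decreasing at $i=\lambda_{n-1}$ because $\lambda_{n-1}-1\leq \lambda_{n-4}$. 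Adding the two inequalities gives $s_{\lambda_{n-1}-1}<s_{\lambda_{n-1}}$, which by the definition of the mode of a unimodal sequence rules out $\chi_n<\lambda_{n-1}$. Symmetrically, for any $i\geq \lambda_{n-4}+1$ one has $i\geq \lambda_{n-1}$, so $s_i(P_{n-1})\geq s_{i+1}(P_{n-1})$; moreover $i-1\geq \lambda_{n-4}$, so $s_{i-1}(P_{n-4})\geq s_i(P_{n-4})$. Summing yields $s_i\geq s_{i+1}$ for all such $i$, which combined with the unimodality (from claw-freeness) of $I(\CAE_n;t)$ forces $\chi_n\leq \lambda_{n-4}+1$.

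The main obstacle is index bookkeeping rather than any single hard inequality: multiplying by $t$ shifts the effective mode of the second summand from $\lambda_{n-4}$ to $\lambda_{n-4}+1$, and the strict-increase window of $I(P_{n-1};t)$ must line up with the non-decreasing window of $t\,I(P_{n-4};t)$. The inequality $\lambda_{n-1}\leq \lambda_{n-4}+1$ supplied by Lemma~\ref{lem_compare_lambda}(ii) is precisely what guarantees this overlap, so the argument reduces to the direct termwise comparisons above.
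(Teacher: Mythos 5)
Your proof is correct and takes essentially the same route as the paper: the same vertex-deletion identity $I(\CAE_n;t)=I(P_{n-1};t)+tI(P_{n-4};t)$ at the vertex $n$, followed by a termwise comparison of the two summands against the modes $\lambda_{n-1}$ and $\lambda_{n-4}$. The only difference is that where the paper re-verifies the monotonicity of the second summand $s_{i-1}(P_{n-4})=\binom{n-i-2}{i-1}$ by explicit quadratic/square-root computations, you deduce it directly from $\lambda_{n-1}\le\lambda_{n-4}+1$ (Lemma \ref{lem_compare_lambda}(ii)) together with the known increasing/decreasing behaviour of path coefficients around their mode, which is a legitimate and slightly cleaner shortcut.
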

\begin{proof}
The first assertion follows from applying  Proposition~\ref{FormulaforIndependence}(i) for the vertex numbered $n$.

Let $1\le i\leq \lambda_{n-1}$. We need to show that 
	$$s_{i-1}(\CAE_n)<s_i(\CAE_n),$$
namely, 
\[
\binom{n-i+1}{i-1}+\binom{n-i-1}{i-2}< \binom{n-i}{i}+\binom{n-i-2}{i-1}.
\]
This is clear for $i=1$, so we assume that $i\ge 2$.

Since $i\leq \lambda_{n-1}$, $\binom{n-i+1}{i-1}\le\binom{n-i}{i}$. It suffices to show that
	 \begin{align*}
		&\binom{n-i-1}{i-2}< \binom{n-i-2}{i-1}\\
		&\Leftrightarrow (n-i-1)(i-1)< (n-2i)(n-2i+1)\\
		&\Leftrightarrow 5i^2-(5n+2)i+n^2+2n-1> 0\\
		&\Leftrightarrow i< \dfrac{5n+2-\sqrt{5n^2-20n+24}}{10}\ \text{or}\ i> \dfrac{5n+2+\sqrt{5n^2-20n+24}}{10}.
	\end{align*}
	
	As $i\leq \lambda_{n-1}$, it is enough to show that 
	\begin{align*}
		&\dfrac{5(n-1)+2-\sqrt{5(n-1)^2+20(n-1)+24}}{10}< \dfrac{5n+2-\sqrt{5n^2-20n+24}}{10} -1\\
		&\Leftrightarrow 5n-3-\sqrt{5n^2+10n+9} < 5n-8-\sqrt{5n^2-20n+24}\\
		&\Leftrightarrow 5+\sqrt{5n^2-20n+24} < \sqrt{5n^2+10n+9}\\
		&\Leftrightarrow \sqrt{5n^2-20n+24}< 3n-4 \quad(\text{after squaring and simplifying})\\
		&\Leftrightarrow n^2-n-2> 0\\
		&\Leftrightarrow (n+1)(n-2)> 0,
	\end{align*}
	which is clear for any $n\ge 4$. It follows that $\lambda_{n-1}\le \chi_n.$ It remains to show that if $\lfloor\frac{n}{2} \rfloor \ge i\geq \lambda_{n-4}+1$ (note that $\lfloor\frac{n}{2} \rfloor$ is the independence number of $\CAE_n$), then 
$$s_{i}(\CAE_n)\ge s_{i+1}(\CAE_n)\Longleftrightarrow \binom{n-i}{i}+\binom{n-i-2}{i-1}\ge \binom{n-i-1}{i+1}+\binom{n-i-3}{i}.$$
	By Lemma \ref{lem_compare_lambda}, $i\geq \lambda_{n-4}+1\ge \lambda_{n-1}$, so $\binom{n-i}{i}\ge\binom{n-i-1}{i+1}$ thanks to Proposition \ref{IndependencePolynomialofPath}. We have to show that
	\begin{align*}
		\binom{n-i-2}{i-1}&\ge \binom{n-i-3}{i}\\
		&\Leftrightarrow i(n-i-2)\ge (n-2i-2)(n-2i-1)\\
		&\Leftrightarrow 5i^2-(5n-8)i+n^2-3n+2\le 0\\
		&\Leftrightarrow \dfrac{5n-8-\sqrt{5n^2-20n+24}}{10}\le i\le \dfrac{5n-8+\sqrt{5n^2-20n+24}}{10}
	\end{align*}
	Since $\lfloor\frac{n}{2} \rfloor \ge i\geq \lambda_{n-4}+1$, and by simple computations,
	\[
    \left\lfloor\frac{n}{2} \right \rfloor \le \dfrac{5n-8+\sqrt{5n^2-20n+24}}{10} \quad \text{for all $n\ge 5$},
	\]
the inequality on the right of the last chain is always true.	Thus it is enough to prove the inequality on the left, which would be true if
	\begin{align*}
		&\dfrac{5n-8-\sqrt{5n^2-20n+24}}{10}\le \dfrac{5(n-4)+2-\sqrt{5(n-4)^2+20(n-4)+24}}{10}+1\\
		&\Leftrightarrow \dfrac{5n-8-\sqrt{5n^2-20n+24}}{10}\le \dfrac{5n-18 +\sqrt{5n^2-20n+24}}{10}+1,
	\end{align*}
	which is clear. Thus $\chi_n \le \lambda_{n-4}+1.$ The proof is completed.  
\end{proof}

Note that $\Pan_n$ is not a claw-free graph. Hence, we need to show the unimodality of its independence polynomial. We have the following. 

\begin{Lem}\label{lem_compare_lambda_chi-zeta}
	The independence polynomial $I(\Pan_n;t)$ of the $n$-pan graph is unimodal. Let $\zeta_n$ be the mode of $I(\Pan_n;t)$. Then for all $n\ge 5$, there are inequalities 
	\begin{equation}
	\label{eq_bouding_zeta}
	 \chi_{n+1}\leq \zeta_n\leq\rho_n+1\leq \lambda_n+1\leq \chi_{n+1}+1,
	\end{equation}
	where $\lambda_n, \rho_n, \chi_n$ are as in Propositions \ref{IndependencePolynomialofPath}, \ref{Prop_independenceofCn} and Lemma \ref{lem_compare_lambda_chi}.
\end{Lem}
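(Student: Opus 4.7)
The plan is to bound the finite differences $\Delta_i := s_i(\Pan_n) - s_{i-1}(\Pan_n)$ on two overlapping ranges using two different decompositions of $I(\Pan_n;t)$, and then verify that the tiny uncovered ``gap'' between those ranges cannot create a second peak.

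Applying Proposition~\ref{FormulaforIndependence}(i) first at the pendant vertex of $\Pan_n$ (whose deletion gives $C_n$ and whose closed-neighborhood deletion gives $P_{n-1}$) and then at the cycle vertex adjacent to the attachment point (whose deletion gives $P_n$ and whose closed-neighborhood deletion gives $P_{n-3}$ together with an isolated copy of the pendant), I derive
\begin{align*}
I(\Pan_n;t) &= I(C_n;t) + t\,I(P_{n-1};t),\\
I(\Pan_n;t) &= I(P_n;t) + t(1+t)\,I(P_{n-3};t) = I(\CAE_{n+1};t) + t^2\,I(P_{n-3};t),
\end{align*}
where the last equality uses $I(\CAE_{n+1};t) = I(P_n;t) + t\,I(P_{n-3};t)$ from Lemma~\ref{lem_compare_lambda_chi}.

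The second decomposition rewrites $\Delta_i = [s_i(\CAE_{n+1}) - s_{i-1}(\CAE_{n+1})] + [s_{i-2}(P_{n-3}) - s_{i-3}(P_{n-3})]$. For $i\leq\chi_{n+1}$, unimodality of $I(\CAE_{n+1};t)$ makes the first bracket nonnegative, and strictly positive at $i=\chi_{n+1}$ by the definition of the mode. Since $\chi_{n+1}\leq\lambda_{n-3}+1$ by Lemma~\ref{lem_compare_lambda_chi}, one has $i-2<\lambda_{n-3}$, so unimodality of $I(P_{n-3};t)$ forces the second bracket to be nonnegative. Hence $\Delta_i \geq 0$ for all $i \leq \chi_{n+1}$, with $\Delta_{\chi_{n+1}}>0$ strictly. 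Symmetrically, the first decomposition writes $\Delta_i = [s_i(C_n) - s_{i-1}(C_n)] + [s_{i-1}(P_{n-1}) - s_{i-2}(P_{n-1})]$. For $i\geq\rho_n+2$, unimodality of $I(C_n;t)$ makes the first bracket nonpositive, and Lemma~\ref{lem_compare_rho_lambda} yields $i-2\geq\rho_n\geq\lambda_{n-1}$, so unimodality of $I(P_{n-1};t)$ forces the second bracket to be nonpositive. Hence $\Delta_i\leq 0$ for all $i \geq \rho_n+2$.

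Combining Lemmas~\ref{lem_compare_lambda}, \ref{lem_compare_rho_lambda} and~\ref{lem_compare_lambda_chi} shows $\rho_n \leq \chi_{n+1} \leq \rho_n+1$, so the interval $[\chi_{n+1}+1,\rho_n+1]$ on which $\Delta_i$ has not yet been pinned contains at most one integer. Regardless of the sign of $\Delta_i$ at that single potential gap index, the sequence $(s_i(\Pan_n))$ is weakly increasing then weakly decreasing, so $I(\Pan_n;t)$ is unimodal with mode $\zeta_n \in \{\chi_{n+1},\rho_n+1\}$, which yields $\chi_{n+1}\leq\zeta_n\leq\rho_n+1$. The remaining chain $\rho_n+1\leq\lambda_n+1\leq\chi_{n+1}+1$ in \eqref{eq_bouding_zeta} reduces to $\rho_n\leq\lambda_n$ (from Lemma~\ref{lem_compare_rho_lambda}) and to $\lambda_n\leq\chi_{n+1}$ (from Lemma~\ref{lem_compare_lambda_chi} applied to $\CAE_{n+1}$). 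The main obstacle I anticipate is the precise bookkeeping of mode inequalities required so that the two pinned ranges plus the single gap index fully cover the support of $I(\Pan_n;t)$ without permitting a secondary peak.
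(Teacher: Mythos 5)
Your proposal is correct and follows essentially the same route as the paper: your identity $I(\Pan_n;t)=I(\CAE_{n+1};t)+t^2I(P_{n-3};t)$ is exactly the paper's relation $s_i(\Pan_n)=s_i(\CAE_{n+1})+\binom{n-i}{i-2}$, and your two-range sign analysis of the differences, combined with $\rho_n\le\lambda_n\le\chi_{n+1}$ and $\chi_{n+1}\le\lambda_{n-3}+1$, mirrors the paper's Claims 1 and 2 and its concluding mode argument. The only deviation is in one sub-step, where the paper checks $\binom{n-i+1}{i-3}<\binom{n-i}{i-2}$ by an explicit quadratic computation while you get the same sign information from the unimodality and mode $\lambda_{n-3}$ of $I(P_{n-3};t)$ — a slight streamlining, not a different method.
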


\begin{proof}
By using  Proposition~\ref{FormulaforIndependence}(i) for the vertex of $K_1$, Propositions~\ref{IndependencePolynomialofPath} and~\ref{Prop_independenceofCn}, we have
	\begin{align*}
		I(\Pan_n;t)&=\sum_{i=0}^{\alpha(\Pan_n)} s_i(\Pan_n)t^i\\
		&=I(C_{n};t)+tI(P_{n-1};t)\\
		&=I(P_{n-1};t)+t\big(I(P_{n-3};t)+I(P_{n-1};t)\big)\\
		&=\sum_{i=0}^{\lfloor\frac{n}{2}\rfloor+1}\bigg[\binom{n-i}{i}+\binom{n-i-1}{i-1}+\binom{n-i+1}{i-1}\bigg]t^i.
	\end{align*}
	Therefore, we have
	\begin{align}
		s_i(\Pan_n)&=\binom{n-i}{i}+\binom{n-i-1}{i-1}+\binom{n-i+1}{i-1}  \nonumber\\ 
		            &=\binom{n-i+1}{i}+\binom{n-i-1}{i-1}+\binom{n-i}{i-2} \nonumber\\
		            & \qquad \left(\text{using $\binom{n}{p}=\binom{n-1}{p}+\binom{n-1}{p-1}$}\right ) \nonumber\\
		            &=s_i(\CAE_{n+1})+\binom{n-i}{i-2} \label{eq_siPan} \quad \text{(by Lemma \ref{lem_compare_lambda_chi})}. 
	\end{align}
	We first have the following assertion.

	\noindent \textsc{Claim 1:} $s_{i-1}(\Pan_n)<s_i(\Pan_n)$ for any $i\leq \chi_{n+1}.$\\
	
	\emph{Proof of Claim 1:} For any $1\le i\leq \chi_{n+1}$, $s_{i-1}(\CAE_{n+1})\le s_i(\CAE_{n+1}).$  Therefore by Equation \eqref{eq_siPan}, it suffices to show that 
	\begin{align*}
		&\hspace*{1.2cm}\binom{n-i+1}{i-3}< \binom{n-i}{i-2}\\ 
		&\Longleftrightarrow (n-i+1)(i-2) < (n-2i+3)(n-2i+4) \\
		&\Longleftrightarrow 5i^2-(5n+17)i+n^2+9n+14> 0\\
		&\Longleftrightarrow i< \frac{5n+17-\sqrt{5n^2-10n+9}}{10}\ \text{or}\ i> \frac{5n+17+\sqrt{5n^2-10n+9}}{10}.
	\end{align*}
	By Lemma~\ref{lem_compare_lambda_chi},  
	\begin{align*}
i\leq &\chi_{n+1}\leq \lambda_{n-3}+1 \\
      & < \dfrac{5(n-3)+2-\sqrt{5(n-3)^2+20(n-3)+24}}{10} +2 \\
      &= \dfrac{5n+7-\sqrt{5n^2-10n+9}}{10}.
	\end{align*}
	Consequently, it suffices to show that
	\begin{align*}
		\frac{5n+7-\sqrt{5n^2-10n+9}}{10}\leq \frac{5n+17-\sqrt{5n^2-10n+9}}{10},
	\end{align*}
	which is clear. Thus, we finish the proof of Claim 1.

	Now, by again Proposition~\ref{FormulaforIndependence}, we have
	\begin{align*}
		I(\Pan_n;t)&=\sum_{i=0}^{\alpha(\Pan_n)} s_i(\Pan_n)t^i\\
		&=I(C_{n};t)+tI(P_{n-1};t),
	\end{align*}
	we get $s_i(\Pan_n)=s_i(C_n)+s_{i-1}(P_{n-1}).$ Next we have the following
	
	\noindent \textsc{Claim 2:} $s_{i}(\Pan_n)\ge s_{i+1}(\Pan_n)$ for any $i\ge \rho_{n}+1.$\\
	
	\emph{Proof of Claim 2:} 	
	Since $i\geq \rho_{n}+1$ and $n\ge 5$, $i-1\geq \rho_n\geq \lambda_{n-1}$ by Lemma~\ref{lem_compare_rho_lambda}. It follows that $s_i(C_n)\geq s_{i+1}(C_n)$ and $s_{i-1}(P_{n-1})\geq s_i(P_{n-1}).$ Thus 
	$s_i(\Pan_n)\geq s_{i+1}(\Pan_n)$, as desired.
	
	By Lemmas~\ref{lem_compare_rho_lambda} and~\ref{lem_compare_lambda_chi}, $\rho_n\leq \lambda_n\leq \chi_{n+1},$ which yields the last two inequalities in \eqref{eq_bouding_zeta}. Moreover, it follows from Claims 1 and 2 that  $s_{i-1}(\Pan_n)< s_{i}(\Pan_n)$ for any $i\le \chi_{n+1}$ and $s_i(\Pan_n)\geq s_{i+1}(\Pan_n)$ for any $i\ge \chi_{n+1}+1$. Thus, the independence polynomial $I(\Pan_n;t)$ of the $n$-pan graph is unimodal. Moreover,  $\chi_{n+1}\leq \zeta_n$ by Claim 1 and  $\zeta_n\leq  \rho_{n}+1$ by Claim 2. This concludes the proof.
\end{proof}

\section{WLP for algebras associated to paths and cycles}\label{Section4}
In this section, we study the WLP for artinian algebras associated to paths and cycles. From now on, we always assume $\Char(\kk)=0$ and denote by $\ell$ the sum of variables in the polynomial ring we are working with.

\subsection{Paths}
The artinian algebra associated to $P_n$ is
$$A(P_n)=R/K,$$
where $K=(x_1^2,\ldots,x_n^2)+(x_1x_2,x_2x_3,\ldots,x_{n-1}x_n)\subset R=\kk[x_1,\ldots,x_n]$. The following lemma is useful to an inductive argument on the WLP of $A(P_n)$.

\begin{Lem}\label{Lemma3.12}
For every integer $i$, there is a commutative diagram with exact rows
\begin{align*}
\xymatrix{0\ar[r]& [A(P_{n-2})]_{i-1}\ar[r]\ar[d]_{\cdot\ell} & [A(P_n)]_i\ar[r] \ar[d]_{\cdot\ell}& [A(P_{n-1})]_i\ar[r]\ar[d]^{\cdot\ell}&0\\
		0\ar[r]& [A(P_{n-2})]_{i}\ar[r] & [A(P_n)]_{i+1}\ar[r]& [A(P_{n-1})]_{i+1}\ar[r]&0 }.
	\end{align*}
\end{Lem}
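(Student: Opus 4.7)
My plan is to build the rows of the diagram from the exact sequence
\[
0 \longrightarrow (x_n) \longrightarrow A(P_n) \longrightarrow A(P_n)/(x_n) \longrightarrow 0,
\]
and then verify that the boundary vertex $n$ of the path forces the squares to commute with multiplication by $\ell$. The key observation is that removing the vertex $n$ yields the path $P_{n-1}$, while removing its closed neighborhood $\{n-1,n\}$ yields the path $P_{n-2}$ on vertices $\{1,\dots,n-2\}$.

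First, I would identify the right-hand quotient. The ideal $(x_n)$ plus the defining relations of $A(P_n)$ kill exactly the generator $x_n$, so
\[
A(P_n)/(x_n)\;\cong\; \kk[x_1,\dots,x_{n-1}]/\bigl((x_1^2,\dots,x_{n-1}^2)+(x_1x_2,\dots,x_{n-2}x_{n-1})\bigr)=A(P_{n-1}).
\]
Next, I would identify the principal ideal $(x_n)\subset A(P_n)$ as a shifted copy of $A(P_{n-2})$. Since $x_n^2=0$ and $x_{n-1}x_n=0$ in $A(P_n)$, the annihilator of $x_n$ in $A(P_n)$ contains $(x_{n-1},x_n)$; a direct inspection of monomials shows it equals $(x_{n-1},x_n)$. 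Therefore multiplication by $x_n$ induces an isomorphism of graded $\kk$-vector spaces
\[
A(P_n)/(x_{n-1},x_n)(-1)\;\xrightarrow{\;\cong\;}\;(x_n),
\]
and the quotient on the left is $A(P_{n-2})(-1)$ by the same identification as above. Taking the degree-$i$ component gives the short exact sequence
\[
0\longrightarrow [A(P_{n-2})]_{i-1}\xrightarrow{\;\cdot x_n\;} [A(P_n)]_i \longrightarrow [A(P_{n-1})]_i \longrightarrow 0,
\]
which provides the rows of the diagram.

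Finally, I would check commutativity of both squares. Write $\ell_m=x_1+\cdots+x_m$ for the Lefschetz element of $A(P_m)$. For the right square, the projection $A(P_n)\twoheadrightarrow A(P_{n-1})$ sends $\ell_n$ to $\ell_{n-1}$, so the square commutes tautologically. For the left square, take $a\in [A(P_{n-2})]_{i-1}$. Going right-then-down produces $\ell_n\cdot(x_n a)=(x_1+\cdots+x_{n-2})x_n a + x_{n-1}x_n a + x_n^2 a$, and the last two terms vanish in $A(P_n)$; going down-then-right produces $x_n\cdot\ell_{n-2}a=(x_1+\cdots+x_{n-2})x_n a$. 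These agree, so the square commutes.

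The only subtle step is the computation of $\mathrm{Ann}_{A(P_n)}(x_n)$; everything else is formal. Since the annihilator is a monomial ideal, this boils down to checking which squarefree monomials in $x_1,\dots,x_{n-1}$ that avoid the path relations of $P_{n-1}$ become zero when multiplied by $x_n$, and the only new relation introduced is $x_{n-1}x_n=0$, which forces $x_{n-1}$ into the annihilator but nothing else. This is the one place where the structure of the path (specifically, that $n$ is a leaf with unique neighbor $n-1$) is used essentially.
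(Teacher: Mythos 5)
Your proof is correct and follows essentially the same route as the paper: the paper builds the rows from the exact sequence $0\to R/(K:x_n)(-1)\xrightarrow{\cdot x_n} R/K\to R/(K+(x_n))\to 0$, and your computation of $\mathrm{Ann}_{A(P_n)}(x_n)=(x_{n-1},x_n)$ is exactly the identification $R/(K:x_n)\cong A(P_{n-2})$, with the commutativity checks being the details the paper leaves implicit.
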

\begin{proof} Assume $A(P_n)=R/K$ and set $I=K+(x_n)$ and $J=(K\colon x_n)$. Then
	$A(P_{n-1})\cong R/I$ and $A(P_{n-2})\cong R/J$ and we have the following exact sequence
	$$\xymatrix {0\ar[r]& R/J (-1)\ar[r]^{\quad \cdot x_n}&R/K\ar[r]&R/I\ar[r]&0.}$$
The desired conclusion follows.
\end{proof}
 We now prove our first main result.
\begin{Theo}\label{thm_WLP_paths}
The ring  $A(P_n)$ has the WLP if and only if $n\in \{1,2,\ldots,7,9,10,13\}.$ 
\end{Theo}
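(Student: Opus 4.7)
The plan is to combine a finite computer verification for small $n$ with an inductive propagation argument for large $n$, built on the short exact sequence
\[
0 \to A(P_{n-2})(-1) \xrightarrow{\,\cdot\, x_n\,} A(P_n) \to A(P_{n-1}) \to 0
\]
underlying Lemma~\ref{Lemma3.12}, together with the mode estimates from Proposition~\ref{IndependencePolynomialofPath} and Lemma~\ref{lem_compare_lambda}. First, for each $n$ in an initial range $1 \le n \le N$ chosen large enough to encompass at least one full $11$-step block in the sense of Lemma~\ref{lem_compare_lambda}(iii) (e.g.\ $N = 24$), I would compute directly in \texttt{Macaulay2} the rank of the map $\cdot \ell \colon [A(P_n)]_i \to [A(P_n)]_{i+1}$ in each relevant degree $i$. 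This settles the positive direction that $A(P_n)$ has the WLP precisely for $n \in \{1,\ldots,7,9,10,13\}$ in this range, confirms the exceptional failures at $n = 8, 11, 12$, and for each $n \in \{14,15,\ldots,N\}$ records an explicit degree $d_n$ where $\cdot\ell$ fails to have maximal rank.

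For $n > N$, I would propagate the failure of WLP by induction using the snake lemma applied to the diagram of Lemma~\ref{Lemma3.12}. Writing $K_m^{j}$ and $C_m^{j}$ for the kernel and cokernel of $\cdot\ell\colon [A(P_m)]_j \to [A(P_m)]_{j+1}$, the snake lemma yields a six-term exact sequence
\[
0 \to K_{n-2}^{i-1} \to K_{n}^{i} \to K_{n-1}^{i} \to C_{n-2}^{i-1} \to C_{n}^{i} \to C_{n-1}^{i} \to 0.
\]
Consequently, a surjectivity failure of $\cdot\ell$ on $A(P_{n-1})$ at degree $i$ forces a surjectivity failure on $A(P_n)$ at degree $i$, and an injectivity failure on $A(P_{n-2})$ at degree $i-1$ forces an injectivity failure on $A(P_n)$ at degree $i$. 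Using the mode comparisons in Lemma~\ref{lem_compare_lambda}, one then verifies that the failing degree $d_n$ coming from the base cases continues to lie in the correct ``maximal-rank-requires-surjectivity'' (respectively injectivity) window of $A(P_n)$ as $n$ grows past $N$, so the failure persists indefinitely.

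The main obstacle will be tracking the failing degree against the shifting mode $\lambda_n$. Because $\lambda_n$ can increase by one between consecutive $n$, a surjectivity failure at degree $d \ge \lambda_{n-1}$ for $A(P_{n-1})$ may drop into the injectivity-required range $d < \lambda_n$ for $A(P_n)$, so a naive one-step induction is not quite enough; one must either supplement the surjective propagation by the parallel injective one from $A(P_{n-2})$, or set up the induction in steps calibrated to the estimate $\lambda_{n+11} \ge \lambda_n + 3$ of Lemma~\ref{lem_compare_lambda}(iii). The isolated successes $n = 9, 10, 13$ in the WLP list strongly suggest that a short base range cannot suffice and that the computer verification must exhaust at least one full such growth window before the induction can be started cleanly. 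Once that coordination is handled, the same exact sequence and mode bounds are exactly what drives Theorems~\ref{thm_WLP_cycles} and~\ref{thm_WLP_pan}.
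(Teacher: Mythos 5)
Your overall architecture (a \texttt{Macaulay2} check on an initial range, then inductive propagation of a failure of maximal rank via the exact sequence behind Lemma~\ref{Lemma3.12} and the mode estimates of Lemma~\ref{lem_compare_lambda}) is the same as the paper's, and your two snake-lemma propagation rules are correct. However, the step you yourself flag as the main obstacle is precisely where the argument has a genuine gap, and neither of your proposed fixes closes it. The cokernel rule keeps the failing degree fixed as the index increases by one, and the kernel rule raises the degree by one as the index increases by two; since the six-term sequence never converts one type of failure into the other (from $C_{n-2}^{i-1}\neq 0$ alone, or from $K_{n-1}^{i}\neq 0$ alone, you can conclude nothing about $C_n^i$ or $K_n^i$), any chain built solely from Lemma~\ref{Lemma3.12} moves its failing degree at rate $0$ or $1/2$ per index, while $\lambda_n$ grows at the intermediate rate of one step every three or four indices. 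Hence a surjectivity chain eventually has its degree fall strictly below $\lambda_n$, where a surjectivity failure no longer contradicts maximal rank (recall $h_{\lambda_n-1}<h_{\lambda_n}$), and an injectivity chain eventually has its degree climb past $\lambda_n-1$; taking $N=24$ or calibrating to $\lambda_{n+11}\ge\lambda_n+3$ does not change these rates. Your other suggested supplement --- feeding in an injectivity failure of $A(P_{n-2})$ at degree $\lambda_n-2=\lambda_{n-2}-1$ whenever $\lambda_n=\lambda_{n-1}+1$ --- needs such failures at indices $n-2$ that are never themselves ``jump'' indices; but the computations and Proposition~\ref{Fails_injectivityofPaths} (which is proved \emph{after} the theorem, by a separate tensor argument with gaps of $9$--$11$) provide injectivity failures below the mode only at jump indices, and for non-jump indices such as $n=17$ injectivity below the mode actually holds. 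So the induction as you set it up cannot pass the jump indices $n=20,23,\dots$.

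The missing idea, which is how the paper handles the case $\lambda_n=\lambda_{n-1}+1$ (where $\lambda_{n-1}=\lambda_{n-2}=\lambda_{n-3}$), is to abandon Lemma~\ref{Lemma3.12} at those indices and instead use the exact sequence coming from multiplication by $x_{n-2}$, whose quotient term is $A(P_{n-3})\otimes_\kk A(P_2)$ with $A(P_2)=\kk[x_{n-1},x_n]/(x_{n-1},x_n)^2$. By induction $A(P_{n-3})$ fails surjectivity at degree $\lambda_{n-3}=\lambda_n-1$, and $A(P_2)$ (Hilbert function $(1,2)$) fails surjectivity from degree $0$ to $1$; Lemma~\ref{lem_tensor}(a) then produces a surjectivity failure of the quotient at degree $\lambda_n$, which pulls back to $[A(P_n)]_{\lambda_n}\to[A(P_n)]_{\lambda_n+1}$. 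This tensor-product device is exactly the degree-raising mechanism your scheme lacks: it bumps the failing degree up by one at each jump while keeping the failure of surjectivity type, so the clean induction hypothesis ``surjectivity fails at the mode $\lambda_n$ for all $n\ge 17$'' is self-sustaining. Without this (or an equivalent degree-shifting input), your proposal does not yield the theorem; with it, your Case-1 propagation and the computational base $1\le n\le 17$ suffice, as in the paper.
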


\begin{proof}
Using {\tt Macaulay2} \cite{M2codes} to compute the Hilbert series of the rings $A(P_n)$ and $A(P_n)/\ell A(P_n)$ with $1\leq n\leq 17$, we see that $A(P_n)$ has the WLP for each $n\in\{1,2,\ldots,7,9,10,13\}.$ Furthermore, for each $n\in \{8,11,14,15,17\},\ A(P_n)$ only fails the surjectivity in the multiplication map by $\ell$ from degree $\lambda_n$ to degree $\lambda_n+1$. However, for $n\in\{12,16\}, \ A(P_n)$ only fails the injectivity in the multiplication map by $\ell$ from degree $\lambda_n-1$ to degree $\lambda_{n}$.
	
It remains to show the following 
	
\noindent \textsc{Claim}: The multiplication map $\cdot \ell: [A(P_n)]_{\lambda_n}\longrightarrow [A(P_n)]_{\lambda_n+1}$ is not surjective for all $n\geq 17$.
	
We will prove the above claim by induction on $n$, having just established the case $n=17$. For $n\geq 18$, we consider the multiplication map $$\cdot \ell: [A(P_n)]_{\lambda_n}\longrightarrow [A(P_n)]_{\lambda_n+1}.$$ By Lemma~\ref{lem_compare_lambda}, one has $\lambda_{n-1}\le \lambda_n\le \lambda_{n-1}+1$, hence we consider the following two cases.
	
\noindent \underline{{\bf Case 1: $\lambda_{n}=\lambda_{n-1}$.}} In the diagram of Lemma~\ref{Lemma3.12} where $i=\lambda_{n-1}=\lambda_n$, the right vertical map is not surjective by the induction hypothesis, so neither is the middle vertical map.
	
\noindent \underline{{\bf Case 2: $\lambda_{n}=\lambda_{n-1}+1$.}} By Lemma~\ref{lem_compare_lambda}, one has $\lambda_{n-1}=\lambda_{n-2}=\lambda_{n-3}$. In this case, we must have $n\geq 20$ since $\lambda_{16}=\lambda_{17}=\lambda_{18}=\lambda_{19}=5$.

Assume $A(P_n)=R/K$ and set $I=K+(x_{n-2})$ and $J=K\colon x_{n-2}$. Then we have the following exact sequence
$$\xymatrix {0\ar[r]& R/J (-1)\ar[rr]^{\quad \cdot x_{n-2}}&&R/K\ar[r]&R/I\ar[r]&0},$$
where $R/J\cong A(P_{n-4})\otimes_\kk \kk[x_n]/(x_n^2)$ and $R/I\cong A(P_{n-3})\otimes_\kk A(P_2)$, with $A(P_2)=\kk[x_{n-1},x_n]/(x_{n-1}, x_n)^2.$ This exact sequence gives rise to the following commutative diagram, with exact rows
\begin{align*}
\xymatrix{0\ar[r]& [R/J]_{\lambda_n-1}\ar[r]\ar[d]_{\cdot\ell} & [A(P_n)]_{\lambda_n}\ar[r] \ar[d]_{\cdot\ell}& [R/I]_{\lambda_{n}}\ar[r]\ar[d]^{\cdot\ell}&0\\
0\ar[r]& [R/J]_{\lambda_n}\ar[r] & [A(P_n)]_{\lambda_n+1}\ar[r]& [R/I]_{\lambda_n+1}\ar[r]&0}
\end{align*}
To prove that the middle vertical map is not surjective, it suffices to show that the right vertical map
\[
\cdot\ell: [R/I]_{\lambda_{n}}\longrightarrow [R/I]_{\lambda_{n}+1}
\]
is not surjective. By the inductive hypothesis, $A(P_{n-3})$ fails the surjectivity from degree $\lambda_{n}-1$ to degree $\lambda_n$, as $\lambda_{n-3}=\lambda_n-1$. Clearly, the Hilbert function of $A(P_2)$ is $(1,2)$, and hence $A(P_2)$ fails the surjectivity from degree 0 to degree 1. Then by Lemma~\ref{lem_tensor}(a), $R/I\cong A(P_{n-3})\otimes_\kk A(P_2)$ fails the surjectivity from degree $\lambda_n$ to degree $\lambda_n+1$, as desired.
\end{proof}

The above theorem shows that $A(P_n)$ fails the WLP  since surjectivity fails for any $n\geq 17$. The next result also prove that $A(P_n)$ fails the injectivity for some cases.

\begin{pro}\label{Fails_injectivityofPaths}
Recall the mode $\lambda_n$ of the independence polynomial of $I(P_n;t)$.  If  $n\geq 12$  is an integer such that  $\lambda_n=\lambda_{n-1}+1,$ then $A(P_n)$ fails the injectivity from degree $\lambda_n -1$ to degree $\lambda_n$. 
\end{pro}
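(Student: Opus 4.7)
My plan is strong induction on $n$, in the spirit of the Case~2 argument in the proof of Theorem~\ref{thm_WLP_paths} but using Lemma~\ref{lem_tensor}(b) rather than~(a). The base cases $n \in \{12, 16\}$ are already covered in Theorem~\ref{thm_WLP_paths}; one additional base case $n = 20$, verifiable by the same Macaulay2 approach, will also be required.

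For the inductive step with $n \geq 23$, at a chosen vertex index $k$ set $I = K + (x_k)$ and $J = K \colon x_k$ to obtain the short exact sequence
$$0 \to R/J(-1) \xrightarrow{\cdot x_k} A(P_n) \to R/I \to 0, \qquad R/J \cong A(P_{k-2}) \otimes_\kk A(P_{n-k-1}).$$
Applying the snake lemma for multiplication by $\ell$ yields an embedding of $\ker_{R/J}(\lambda_n - 2)$ into $\ker_{A(P_n)}(\lambda_n - 1)$, so it suffices to produce a nonzero kernel of $\cdot\ell$ on $R/J$ at degree $\lambda_n - 2$, which would follow from Lemma~\ref{lem_tensor}(b).

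The correct choice of $k$ depends on the positions of preceding jump points, controlled by Lemma~\ref{lem_compare_lambda}. Together with the hypothesis $\lambda_n = \lambda_{n-1}+1$, one finds $\lambda_{n-3} = \lambda_n - 1$, and three configurations arise based on $\lambda_{n-4}$ and $\lambda_{n-8}$, Lemma~\ref{lem_compare_lambda}(iii) precluding a fourth by ruling out three consecutive gap-$4$ jump patterns:
\begin{itemize}
\item[\emph{Case B}] $\lambda_{n-4} = \lambda_n - 2$: $n-3$ is a jump point. Choose $k = n-1$ so that $R/J \cong A(P_{n-3})$; the induction hypothesis at $n-3$ directly gives the needed failure.
\item[\emph{Case A.a}] $\lambda_{n-4} = \lambda_n - 1$ and $\lambda_{n-8} = \lambda_n - 3$: $n-7$ is a jump point. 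Choose $k = n-5$ so that $R/J \cong A(P_{n-7}) \otimes A(P_4)$, and combine the induction hypothesis at $n-7$ with the fact that $A(P_4)$ (Hilbert series $1 + 4t + 3t^2$) fails injectivity from degree $1$ to $2$.
\item[\emph{Case A.b}] $\lambda_{n-4} = \lambda_n - 1$ and $\lambda_{n-8} = \lambda_n - 2$: here Lemma~\ref{lem_compare_lambda}(iii) further forces a jump at $n-11$ with $\lambda_{n-11} = \lambda_n - 3$. Choose $k = n-9$ so that $R/J \cong A(P_{n-11}) \otimes A(P_8)$, and combine the induction hypothesis at $n-11$ with the strict injectivity failure of $A(P_8)$ at degree $2 \to 3$ (which follows from the surjectivity failure of $A(P_8)$ at its mode, recorded in Theorem~\ref{thm_WLP_paths}).
\end{itemize}

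The main obstacle is picking the correct cut $x_k$ in Case A: the naive cut $x_{n-2}$ fails to match degree shifts in either subcase, forcing one to use $x_{n-5}$ or $x_{n-9}$ and to exploit the non-trivial injectivity failures of $A(P_4)$ or $A(P_8)$, respectively. The extra base case $n = 20$ is required because Case A.b's recursion reaches back to $n-11$, which is $\geq 12$ only when $n \geq 23$; below this threshold the chain of applicable jump points breaks down.
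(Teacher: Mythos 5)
Your proposal is correct and takes essentially the same route as the paper: induction on the jump points of $\lambda_n$ with {\tt Macaulay2} base cases $n\in\{12,16,20\}$, a short exact sequence obtained by coloning out one cut vertex so that the colon ring is a tensor product of two smaller path algebras, and Lemma~\ref{lem_tensor}(b) combining the inductive injectivity failure with the (dimension-forced) one on the short factor. The only difference is bookkeeping: the paper uniformly goes back to the third preceding jump point $m$ (so $9\le n-m\le 11$) and cuts at $x_{m+2}$, making the short factor $P_{n-m-3}\in\{P_6,P_7,P_8\}$, whereas you choose the cut case by case so the short factor is $P_0$, $P_4$ or $P_8$; both choices work, and your justification for the $P_8$ step is really just the Hilbert function inequality $s_2(P_8)=21>20=s_3(P_8)$ rather than the surjectivity failure recorded in Theorem~\ref{thm_WLP_paths}.
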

\begin{proof}
We prove the above proposition by induction on $n\geq 12$. A computation with {\tt Macaulay2} \cite{M2codes} shows that the proposition holds for $n\in\{12,16,20\}.$ This covers all cases from $12$ to $20$ due to Lemma~\ref{lem_compare_lambda}. Now consider an $n\geq 21$ such that $\lambda_n=\lambda_{n-1}+1.$ Set
\begin{align*}
n_1&=\max\{j \mid j<n \; \text{and}\; \lambda_j=\lambda_{j-1}+1\}\\
n_2&=\max\{j \mid j<n_1 \; \text{and}\; \lambda_j=\lambda_{j-1}+1\}\\
m&=\max\{j \mid j<n_2 \; \text{and}\; \lambda_j=\lambda_{j-1}+1\}.
\end{align*}
Then, by Lemma~\ref{lem_compare_lambda}, $9\le n-m\le 11$. We have the following exact sequence
\[
0\to A(P_m)\otimes_\kk A(P_{n-m-3}) (-1)\xrightarrow{\cdot x_{m+2}}  A(P_n)\to A(P_{m+1})\otimes_\kk A(P_{n-m-2})\to 0.
\]
By using this exact sequence, it suffices to show that 
$$\cdot \ell: [A(P_m)\otimes_\kk A(P_{n-m-3})]_{\lambda_n-2}\longrightarrow [A(P_m)\otimes_\kk A(P_{n-m-3})]_{\lambda_n-1}$$
is not injective. By the inductive hypothesis, $A(P_m)$ fails the injectivity from degree $\lambda_m-1$ to $\lambda_m$. Observe that $\lambda_m=\lambda_n-3$ and $6\le n-m-3\leq 8$. Hence by Table \ref{tab_indpoly}, $\lambda_{n-m-3}= 2$ and consequently, $A(P_{n-m-3})$ fails the injectivity from degree 2 to degree 3. By Lemma~\ref{lem_tensor}(b), $A(P_m)\otimes_\kk A(P_{n-m-3})$ fails the injectivity from degree $\lambda_m+1=\lambda_n-2$ to $\lambda_n-1$, as desired.
\end{proof}

\subsection{Cycles}
The artinian algebra associated to the cycle on $n$ vertices is
$$A(C_n)=R/K,$$
where $K=(x_1^2,\ldots,x_n^2)+(x_1x_2,x_2x_3,\ldots,x_{n-1}x_n,x_nx_1)\subset R=\kk[x_1,\ldots,x_n]$. 
Our second main result is the following.
\begin{Theo}\label{thm_WLP_cycles}
The algebra $A(C_n)$ has the WLP if and only if $n\in \{3,4,\ldots,11,13,\\14,17\}.$ 
\end{Theo}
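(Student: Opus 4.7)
Following the blueprint of the proof of Theorem~\ref{thm_WLP_paths}, the plan is to verify the theorem on a base range via Macaulay2 and then prove WLP failure inductively for all larger $n$.

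First, I would use Macaulay2 to check directly that for $n\le N$ (some threshold in the low 20s) the algebra $A(C_n)$ has the WLP precisely when $n\in\{3,\ldots,11,13,14,17\}$, and that in every failing case in the range the map $\cdot\ell\colon [A(C_n)]_{\rho_n}\to [A(C_n)]_{\rho_n+1}$ is the one that fails to be surjective. This absorbs the sporadic failures at $n=12,15,16$ and provides the base of the induction.

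For $n>N$ the plan is to prove by induction that $\cdot\ell\colon [A(C_n)]_{\rho_n}\to [A(C_n)]_{\rho_n+1}$ is not surjective. The main tool is the short exact sequence coming from colon with a vertex variable $x_v$ of $C_n$: deleting $v$ yields $P_{n-1}$ while deleting $N[v]$ opens the cycle into $P_{n-3}$, so
\[
0\longrightarrow A(P_{n-3})(-1)\xrightarrow{\;\cdot x_v\;} A(C_n)\longrightarrow A(P_{n-1})\longrightarrow 0,
\]
which gives the familiar commutative diagram with exact rows and vertical arrows $\cdot\ell$. By Lemma~\ref{lem_compare_rho_lambda} there are two cases. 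If $\rho_n=\lambda_{n-1}$, then Theorem~\ref{thm_WLP_paths} supplies non-surjectivity of $\cdot\ell$ on $A(P_{n-1})$ at the critical degree (as $n-1\ge 17$), and a diagram chase over cokernels transports the failure to the middle column. If instead $\rho_n=\lambda_{n-1}+1$, then Lemma~\ref{lem_compare_rho_lambda} combined with Lemma~\ref{lem_compare_lambda}(ii) forces $\lambda_{n-1}=\lambda_{n-4}$ and hence $\rho_n=\lambda_n$; here I would switch to the complementary exact sequence coming from the identification $A(C_n)=A(P_n)/(x_1x_n)$,
\[
0\longrightarrow A(P_{n-4})(-2)\xrightarrow{\;\cdot x_1x_n\;} A(P_n)\longrightarrow A(C_n)\longrightarrow 0,
\]
where one checks $K_{P_n}\colon x_1x_n = K_{P_n}+(x_1,x_2,x_{n-1},x_n)$ cuts out $A(P_{n-4})$ on the interior vertices $x_3,\ldots,x_{n-2}$.

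The main obstacle is this second case. Theorem~\ref{thm_WLP_paths} gives non-surjectivity of $\cdot\ell$ on $A(P_n)$ at degree $\lambda_n=\rho_n$, but the snake lemma only produces a right-exact sequence
\[
\operatorname{coker}\bigl(\cdot\ell|_{A(P_{n-4})}\bigr)_{\rho_n-1}\longrightarrow \operatorname{coker}\bigl(\cdot\ell|_{A(P_n)}\bigr)_{\rho_n+1}\longrightarrow \operatorname{coker}\bigl(\cdot\ell|_{A(C_n)}\bigr)_{\rho_n+1}\longrightarrow 0,
\]
so to conclude one must verify that the image of the leftmost cokernel is a proper subspace of the middle one; equivalently, the corank of $\cdot\ell$ on $A(P_n)$ at degree $\lambda_n$ must strictly exceed the corank coming from $A(P_{n-4})$ at degree $\lambda_n-2$. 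I would try to secure this by strengthening the inductive hypothesis from the qualitative statement ``the corank is positive'' to an explicit lower bound on $\dim\operatorname{coker}(\cdot\ell|_{A(P_n)_{\lambda_n}\to A(P_n)_{\lambda_n+1}})$, which can be estimated using the binomial-coefficient formula of Proposition~\ref{IndependencePolynomialofPath} and the long exact sequence inside the path induction of Theorem~\ref{thm_WLP_paths}.
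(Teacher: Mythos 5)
Your base-case computation and your Case 1 ($\rho_n=\lambda_{n-1}$) coincide with the paper's argument and are fine. The problem is Case 2 ($\rho_n=\lambda_{n-1}+1$, so $\lambda_{n-1}=\lambda_{n-4}$ and $\rho_n=\lambda_n$), where your plan has a genuine gap that you yourself flag but do not close. Presenting $A(C_n)$ as the quotient in
$0\to A(P_{n-4})(-2)\xrightarrow{\cdot x_1x_n} A(P_n)\to A(C_n)\to 0$
puts the failure you know about (non-surjectivity of $\cdot\ell$ on $A(P_n)$ at degree $\lambda_n$) on the wrong side: surjectivity passes \emph{down} to quotients, so non-surjectivity on $A(P_n)$ says nothing about $A(C_n)$ unless you show the middle cokernel is not exhausted by the image of the left one. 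Worse, in this case the left cokernel is automatically nonzero: its relevant graded piece is $\operatorname{coker}\bigl(\cdot\ell\colon [A(P_{n-4})]_{\lambda_{n-4}-1}\to[A(P_{n-4})]_{\lambda_{n-4}}\bigr)$, and since the Hilbert function of $A(P_{n-4})$ strictly increases into its mode $\lambda_{n-4}$, this map fails surjectivity for dimension reasons, with corank at least $s_{\lambda_{n-4}}(P_{n-4})-s_{\lambda_{n-4}-1}(P_{n-4})$, which can be large. So it is entirely possible a priori that the left cokernel surjects onto the middle one, and the conclusion for $A(C_n)$ would evaporate. Your proposed remedy (carrying an explicit lower bound for the corank of $\cdot\ell$ on $A(P_n)$ at degree $\lambda_n$ through the induction of Theorem~\ref{thm_WLP_paths}) is a substantial quantitative strengthening that is neither stated nor proved anywhere in your sketch, so as written the argument does not establish the theorem for the Case 2 values of $n$.

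The paper's proof avoids this entirely by going in the other direction: instead of realizing $A(C_n)$ as a quotient of $A(P_n)$, it maps $A(C_n)$ \emph{onto} quotients, first modulo $x_n$ onto $A(P_{n-1})$ and then modulo $x_{n-3}$ onto $A(P_{n-4})\otimes_\kk \kk[y_1,y_2]/(y_1,y_2)^2$. Non-surjectivity of $\cdot\ell$ on a quotient pulls back to non-surjectivity on $A(C_n)$, and the non-surjectivity on the tensor product at degree $\lambda_{n-4}+1=\rho_n$ comes for free from Lemma~\ref{lem_tensor}(a), using the Claim in the proof of Theorem~\ref{thm_WLP_paths} for $A(P_{n-4})$ (here $n-4\ge 17$) together with the trivial failure for $\kk[y_1,y_2]/(y_1,y_2)^2$ at degree $0$. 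No corank estimates are needed. Replacing your Case 2 by this quotient-and-tensor argument (or otherwise supplying the missing quantitative corank comparison) is necessary to complete the proof.
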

\begin{proof}
	Recall that $\rho_n$ is the mode of the independence polynomial of  $C_n$. Using {\tt Macaulay2} \cite{M2codes} to compute the Hilbert series of $A(C_n)$ and $A(C_n)/\ell A(C_n)$ with $3\leq n\leq 20$, we can check that:
	\begin{itemize}
		\item $A(C_n)$ has the WLP for each $3\leq n\leq 17$ and $n\notin\{12,15,16\}$; 
		\item for $n \in  \{12,15,18,19\}$, then $A(C_n)$ fails the surjectivity from degree $\rho_n$ to degree $\rho_{n}+1$;
		\item for $n \in  \{16,20\}$, then $A(C_n)$ fails the injectivity from degree $\rho_n -1$ to degree $\rho_n.$  
	\end{itemize}

Now assume that $n\geq 21$. By Lemmas \ref{lem_compare_lambda} and \ref{lem_compare_rho_lambda},
$\lambda_{n-1}\leq \rho_n\leq \lambda_{n-4}+1 \le \lambda_{n-1}+1$.  Consider the following two cases.

\noindent \underline{{\bf Case 1: $\rho_n=\lambda_{n-1}$.}} In this case, we will show that $A(C_n)$ fails the WLP due to the failure of the surjectivity from degree $\rho_n$ to degree $\rho_{n}+1.$ Indeed, write $A(C_n)=R/K$, and let $I=K+(x_n)$ and $J=K\colon x_n$.  Then
$A(P_{n-1})\cong R/I$ and $A(P_{n-3})\cong R/J$ and we have the following exact sequence
$$\xymatrix {0\ar[r]& R/J(-1)\ar[r]^{\quad \cdot x_n}&R/K \ar[r]&R/I\ar[r]&0.}$$
This yields a commutative diagram
	\begin{align*}
	\xymatrix{0\ar[r]& [A(P_{n-3})]_{\rho_n-1}\ar[r]\ar[d]_{\cdot\ell} & [A(C_n)]_{\rho_n}\ar[r] \ar[d]_{\cdot\ell}& [A(P_{n-1})]_{\rho_n}\ar[r]\ar[d]^{\cdot\ell}&0\\
		0\ar[r]& [A(P_{n-3})]_{\rho_n}\ar[r] & [A(C_n)]_{\rho_n+1}\ar[r]& [A(P_{n-1})]_{\rho_n+1}\ar[r]&0 }
	\end{align*}
	The proof of Theorem~\ref{thm_WLP_paths} shows that the multiplication map 
	$$\cdot\ell: [A(P_{n-1})]_{\rho_n}\longrightarrow [A(P_{n-1})]_{\rho_n+1}$$ is not surjective for any $n\geq 18.$ Hence the middle vertical map 
\[
\cdot\ell: [A(C_{n})]_{\rho_n}\longrightarrow [A(C_{n})]_{\rho_n+1}
\] is not surjective, as desired.
	
	\noindent \underline{{\bf Case 2: $\rho_n=\lambda_{n-1}+1$.}} In this case, Lemmas \ref{lem_compare_lambda} and \ref{lem_compare_rho_lambda} yield $\lambda_{n-1}=\lambda_{n-4}$.
	
Denote $y_1=x_{n-1}, y_2=x_{n-2}$. We have the following diagram 
\begin{align*}
\xymatrix{[A(C_n)]_{\rho_n}\ar@{->>}[r]^{/(x_n)\quad} \ar[d]_{\cdot \ell} & [A(P_{n-1})]_{\lambda_{n-1}+1}\ar@{->>}[r]^{/(x_{n-3})\qquad} & \left[A(P_{n-4}) \otimes_\kk \frac{\kk[y_1,y_2]}{(y_1,y_2)^2}\right]_{\lambda_{n-4}+1}\ar[d]^{\cdot \ell}\\
[A(C_n)]_{\rho_n+1}\ar@{->>}[r] & [A(P_{n-1})]_{\lambda_{n-1}+2}\ar@{->>}[r]& \left[A(P_{n-4}) \otimes_\kk \frac{\kk[y_1,y_2]}{(y_1,y_2)^2}\right]_{\lambda_{n-4}+2} }
\end{align*}
By the proof of Theorem \ref{thm_WLP_paths} and the fact that $n-4\ge 17$, the map 
$$A(P_{n-4}) \xrightarrow{\cdot \ell} A(P_{n-4})$$
fails the surjectivity at degree $\lambda_{n-4}$. Since the map 
\[
\kk[y_1,y_2]/(y_1,y_2)^2 \xrightarrow{\cdot (y_1+y_2)} \kk[y_1,y_2]/(y_1,y_2)^2
\] 
fails the surjectivity at degree 0, Lemma \ref{lem_tensor}(a) yields that the right vertical map of the diagram fails the surjectivity at degree $\lambda_{n-4}+1$.
	
By the surjectivity of the horizontal maps in the diagram, we conclude that left vertical map in the diagram fails the surjectivity at degree $\lambda_{n-4}+1=\rho_n$. Hence $A(C_n)$ does not have the WLP. This concludes the proof.
\end{proof}

Next, we consider a special case of the tadpole graphs. The \emph{tadpole} graph $T_{3,n}$ is obtained by joining a cycle $C_3$ to a path $P_n$  with a bridge. 

\begin{figure}[!h]
	\centering
	\begin{tikzpicture}
	[
	every edge/.style = {draw=black,very thick},
	vrtx/.style args = {#1/#2}{%
		circle, draw, thick, fill=black,
		minimum size=1mm, label=#1:#2}
	]
		\node (n1)[vrtx=above/$x_1$] at (-1.7,1) {};
		\node (n2) [vrtx=left/$x_2$] at (-1.7,-1)  {};
		\node (n3) [vrtx=below/$x_3$]at (0,0)  {};
		\node (n4) [vrtx=below/$y_1$] at (1.5,0) {};
		\node (n5) [vrtx=below/$y_2$] at (3,0)  {};
		\node (n6) [vrtx=below/$y_3$]at (4.5,0) {};
		\node (n7) [vrtx=below/$y_4$]at (6,0)  {};
		\node (n8) [vrtx=below/$y_5$]at (7.5,0)  {};
		\foreach \from/\to in {n1/n2,n1/n3,n2/n3,n3/n4,n4/n5,n5/n6,n6/n7,n7/n8}
		\draw (\from) -- (\to);	
	\end{tikzpicture}
	\caption{ Tadpole $T_{3,5}$}
\end{figure}

 Clearly, $T_{3,n}$ is a claw-free graph. Therefore, the independence polynomial of $T_{3,n}$ is unimodal \cite{Hamidoune90}. 
By Proposition~\ref{FormulaforIndependence} for either of the vertices on the left of the three cycle, we have
\begin{align*}
	I(T_{3,n};t)=I(P_{n+2};t)+tI(P_{n};t)=I(C_{n+3};t).
\end{align*}
By Proposition~\ref{Prop_independenceofCn}, it follows that the mode of $I(T_{3,n};t)$ is equal to that of $I(C_{n+3};t)$, which is 
$$
\rho_{n+3}=\left\lceil\frac{5(n+3)-4-\sqrt{5(n+3)^2-4}}{10}\right\rceil.
$$
\begin{Cor}\label{thm_WLP_tadpole(3,n)}
	The algebra  $A(T_{3,n})$ has the WLP if and only if $n\in \{1,3,4,7\}.$ 
\end{Cor}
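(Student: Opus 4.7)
The proof plan follows the template of Theorems \ref{thm_WLP_paths} and \ref{thm_WLP_cycles}: verify the positive cases together with an initial segment of the failures via direct Macaulay2 computation, then propagate failures inductively for all larger $n$ using a well-chosen short exact sequence.

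First, I would use Macaulay2 \cite{M2codes} to compute the Hilbert series of $A(T_{3,n})$ and of $A(T_{3,n})/\ell A(T_{3,n})$ for $n$ up to a sufficient threshold (say $n\le 20$). This confirms that WLP holds exactly for $n\in\{1,3,4,7\}$ in this range, and also records, in each failure case, whether surjectivity or injectivity breaks at the mode $\rho_{n+3}$ of $I(C_{n+3};t)$ (recalling that $I(T_{3,n};t)=I(C_{n+3};t)$).

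For the induction, modding out the triangle vertex $x_1$ yields the exact sequence
\[
0 \longrightarrow A(P_n)(-1) \xrightarrow{\;\cdot\, x_1\;} A(T_{3,n}) \longrightarrow A(P_{n+2}) \longrightarrow 0,
\]
since $T_{3,n}\setminus\{x_1\}\cong P_{n+2}$ (on the vertices $x_2,x_3,y_1,\ldots,y_n$) and $T_{3,n}\setminus N[x_1]\cong P_n$. The induced degree-$i$ commutative diagram
\[
\xymatrix{
0 \ar[r] & [A(P_n)]_{i-1} \ar[r] \ar[d]^{\cdot\ell} & [A(T_{3,n})]_{i} \ar[r] \ar[d]^{\cdot\ell} & [A(P_{n+2})]_{i} \ar[r] \ar[d]^{\cdot\ell} & 0 \\
0 \ar[r] & [A(P_n)]_{i} \ar[r] & [A(T_{3,n})]_{i+1} \ar[r] & [A(P_{n+2})]_{i+1} \ar[r] & 0
}
\]
gives two propagation principles by diagram chasing: (i) if $A(P_{n+2})$ fails surjectivity at degree $i$, then so does $A(T_{3,n})$; (ii) if $A(P_n)$ fails injectivity at degree $i-1$, then $A(T_{3,n})$ fails injectivity at degree $i$.

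By Lemma \ref{lem_compare_rho_lambda} applied with $n+3$ in place of $n$, one has $\lambda_{n+2}\le \rho_{n+3}\le \lambda_{n-1}+1$, so either $\rho_{n+3}=\lambda_{n+2}$ or $\rho_{n+3}=\lambda_{n+2}+1$. In the former case, for $n$ large enough Theorem \ref{thm_WLP_paths} gives surjectivity failure of $A(P_{n+2})$ at $\lambda_{n+2}=\rho_{n+3}$, which transfers by (i). In the latter case, I would invoke Proposition \ref{Fails_injectivityofPaths} to obtain injectivity failure of $A(P_n)$ at $\lambda_n-1$, and use the arithmetic identities among $\lambda_n$, $\lambda_{n+2}$, and $\rho_{n+3}$ (from Lemmas \ref{lem_compare_lambda} and \ref{lem_compare_rho_lambda}) to match $\lambda_n-1=\rho_{n+3}-1$ and transfer by (ii).

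The delicate point is ensuring that across all residues of $n$ at least one of the two principles applies exactly at the mode $\rho_{n+3}$. When neither fires directly, I would supplement with the alternative exact sequence obtained from quotienting by the bridge vertex $x_3$, namely
\[
0 \longrightarrow A(P_{n-1})(-1) \xrightarrow{\;\cdot\, x_3\;} A(T_{3,n}) \longrightarrow A(K_2) \otimes_\kk A(P_n) \longrightarrow 0,
\]
combined with Lemma \ref{lem_tensor} applied to the tensor factor $A(K_2)$, which trivially fails surjectivity from degree $0$ to $1$ (its Hilbert function is $(1,2)$). The main obstacle — exactly as in the cycle case — will be bookkeeping which of the three sequences applies for each residue class of $n$ modulo a small period, so that the propagated failure lands precisely at the mode $\rho_{n+3}$ and thereby contradicts WLP.
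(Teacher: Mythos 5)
Your overall architecture is the paper's own: Macaulay2 verification on an initial segment, then, for large $n$, the exact sequence $0\to A(P_n)(-1)\xrightarrow{\;\cdot x_1\;} A(T_{3,n})\to A(P_{n+2})\to 0$ and a case split according to whether $\rho_{n+3}$ equals $\lambda_{n+2}$ or $\lambda_{n+2}+1$; your Case 1 coincides with the paper's (surjectivity failure of $A(P_{n+2})$ at $\lambda_{n+2}=\rho_{n+3}$, transferred to the middle term). The genuine problem is your primary mechanism in Case 2. If $\rho_{n+3}=\lambda_{n+2}+1$, then Lemma~\ref{lem_compare_rho_lambda} applied with $n+3$ in place of $n$ gives $\rho_{n+3}\le\lambda_{n-1}+1$, which together with monotonicity forces $\lambda_{n-1}=\lambda_{n}=\lambda_{n+1}=\lambda_{n+2}=\rho_{n+3}-1$. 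In particular $\lambda_n=\lambda_{n-1}$, so the hypothesis $\lambda_n=\lambda_{n-1}+1$ of Proposition~\ref{Fails_injectivityofPaths} is violated precisely in the case where you want to use it, and no injectivity failure of $A(P_n)$ at $\lambda_n-1$ is available; moreover your matching identity $\lambda_n-1=\rho_{n+3}-1$ cannot hold, since $\lambda_n\le\lambda_{n+2}=\rho_{n+3}-1$. So the route through Proposition~\ref{Fails_injectivityofPaths} never fires in Case 2.

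Fortunately, the fallback you sketch is not a supplement but the actual proof of Case 2, and you should simply commit to it: from $0\to A(P_{n-1})(-1)\xrightarrow{\;\cdot x_3\;} A(T_{3,n})\to A(K_2)\otimes_\kk A(P_n)\to 0$, Theorem~\ref{thm_WLP_paths} gives that $\cdot\ell$ on $A(P_n)$ fails surjectivity at degree $\lambda_n$ (valid since $n\ge 18>17$), $A(K_2)$ fails surjectivity from degree $0$ to $1$, and Lemma~\ref{lem_tensor}(a) places the surjectivity failure of the tensor product at degree $\lambda_n+1=\rho_{n+3}$; pulling back along the surjection onto the quotient, $A(T_{3,n})$ fails surjectivity at its mode, contradicting the WLP. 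With this, no bookkeeping over residue classes is needed: the two cases $\rho_{n+3}\in\{\lambda_{n+2},\lambda_{n+2}+1\}$ exhaust everything, exactly as for cycles. This is essentially the paper's argument, which runs the same first diagram but, in Case 2, factors $A(P_{n+2})$ through a further quotient of the form $A(P_{n-1})\otimes_\kk\kk[y_1,y_2]/(y_1,y_2)^2$ as in Case 2 of Theorem~\ref{thm_WLP_cycles}; your computational threshold $n\le 20$ is more than sufficient, since the induction only needs the path results and starts from $n\ge 18$.
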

\begin{proof}
The artinian algebra associated to $T_{3,n}$ is
\[
A(T_{3,n})=\frac{\kk[x_1,x_2,x_3,y_1,\ldots,y_{n}]}{(x_1^2,x_2^2,x_3^2,y_1^2,\ldots,y_{n}^2)+(x_1x_2,x_2x_3, x_3x_1, x_3y_1, y_1y_2,\ldots,y_{n-1}y_n}).
\]
Using {\tt Macaulay2} \cite{M2codes} to compute the Hilbert series of $A(T_{3,n})$ and $A(T_{3,n})/\ell A(T_{3,n})$ with $1\leq n\leq 17$, we can check that:
\begin{itemize}
	\item $A(T_{3,n})$ has the WLP for each  $n\in\{1,3,4, 7\}$; 
	\item for $n \in  \{2,5,8,9,11, 12, 14, 15, 16, 17\}$, then $A(T_{3,n})$ fails the surjectivity from degree $\rho_{n+3}$ to degree $\rho_{n+3}+1$;
	\item for $n \in  \{2, 6, 10, 13, 14, 17\}$, then $A(T_{3,n})$ fails the injectivity from degree $\rho_{n+3} -1$ to degree $\rho_{n+3}.$  
\end{itemize}
Now assume that $n\geq 18$. By Lemmas \ref{lem_compare_lambda} and \ref{lem_compare_rho_lambda}, 
$\lambda_{n+2}\leq \rho_{n+3}\leq \lambda_{n+2}+1$. We consider the following commutative diagram
\begin{align*}
\xymatrix{0\ar[r]& [A(P_{n})]_{\rho_{n+3}-1}\ar[r]^{ \cdot x_1\quad}\ar[d]_{\cdot\ell} & [A(T_{3,n})]_{\rho_{n+3}}\ar[r] \ar[d]_{\cdot\ell}& [A(P_{n+2})]_{\rho_{n+3}}\ar[r]\ar[d]^{\cdot\ell}&0\\
0\ar[r]& [A(P_{n})]_{\rho_{n+3}}\ar[r]^{\cdot x_1\quad} & [A(T_{3,n})]_{\rho_{n+3}+1}\ar[r]& [A(P_{n+2})]_{\rho_{n+3}+1}\ar[r]&0 }.
\end{align*}
The proof proceeds along the same lines as that of Theorem~\ref{thm_WLP_cycles}, replacing $A(C_{n+3})$ by $A(T_{3,n})$ and noting that $n+3\ge 21$.
\end{proof}

\subsection{Pans}
To study the WLP of rings associated to pans, we first examine the WLP of rings associated to $\CAE_n$. The latter is by definition
\[
A(\CAE_n)=\frac{\kk[x_1,\ldots,x_n]}{(x_1^2,\ldots,x_n^2)+(x_1x_2,x_2x_3,\ldots,x_{n-1}x_n,x_nx_1)+(x_{n-2}x_{n})}.
\]

\begin{figure}[h]
	\centering
	\begin{tikzpicture}[
	every edge/.style = {draw=black,very thick},
	vrtx/.style args = {#1/#2}{%
		circle, draw, thick, fill=black,
		minimum size=1mm, label=#1:#2}
	]
	\node (n1) [vrtx=above/$x_2$]  at (-1,0) {};
	\node (n2) [vrtx=above/$x_1$]at (1,0)  {};
	\node (n3) [vrtx=right/$x_6$]at (2,-1.5)  {};
	\node (n4) [vrtx=below/$x_5$]at (1,-3) {};
	\node (n5) [vrtx=below/$x_4$]at (-1,-3)  {};
	\node (n6) [vrtx=left/$x_3$]at (-2,-1.5)  {};
	\foreach \from/\to in {n1/n2,n2/n3,n3/n4,n4/n5,n5/n6,n6/n1, n3/n5}
	\draw (\from) -- (\to);	
\end{tikzpicture}
\caption{The graph $\CAE_6$}
\end{figure}

\begin{Theo}
\label{thm_WLP_CE}
For an integer $n\ge 4$, the algebra $A(\CAE_n)$ has the WLP if and only if $n\in \{4,5,\ldots,8,10,11,14\}.$ 
\end{Theo}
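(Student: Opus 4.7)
The plan is to follow the template of Theorem \ref{thm_WLP_cycles} closely. First, a {\tt Macaulay2} computation of the Hilbert series of $A(\CAE_n)$ and $A(\CAE_n)/\ell A(\CAE_n)$ for $4 \le n \le 20$ directly verifies that WLP holds exactly on the set $\{4,5,\ldots,8,10,11,14\}$ within this range. This also records whether surjectivity or injectivity is the failure mechanism for each non-WLP $n$ in the range, and it provides the base of the induction that follows.

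For $n \ge 21$, the key tool is the short exact sequence
\[
0 \to A(P_{n-4})(-1) \xrightarrow{\;\cdot x_n\;} A(\CAE_n) \to A(P_{n-1}) \to 0.
\]
Writing $A(\CAE_n) = R/J$, the vertex $n$ is adjacent in $\CAE_n$ to $1$, $n-2$, and $n-1$, so $(J : x_n) = J + (x_1, x_{n-2}, x_{n-1}, x_n)$ and $R/(J:x_n) \cong A(P_{n-4})$, the algebra of the induced subpath on $\{2, 3, \ldots, n-3\}$; meanwhile $R/(J + (x_n)) \cong A(\CAE_n \setminus \{n\}) \cong A(P_{n-1})$. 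By Lemma~\ref{lem_compare_lambda_chi}, $\lambda_{n-1} \le \chi_n \le \lambda_{n-4} + 1$, and by Lemma~\ref{lem_compare_lambda} these two bounds differ by at most one, so either $\chi_n = \lambda_{n-1}$ or $\chi_n = \lambda_{n-1}+1 = \lambda_{n-4}+1$.

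In Case 1, where $\chi_n = \lambda_{n-1}$, the commutative diagram induced by the exact sequence with vertical multiplications by $\ell$ from degree $\chi_n$ to $\chi_n + 1$ has its right-hand column non-surjective, because the Claim in the proof of Theorem~\ref{thm_WLP_paths} applies to $n-1\ge 20$; hence the middle column is non-surjective. In Case 2, where $\chi_n = \lambda_{n-4}+1$, further quotienting $A(P_{n-1})$ by $x_{n-3}$ produces a surjection
\[
A(\CAE_n) \twoheadrightarrow A(P_{n-4}) \otimes_\kk \frac{\kk[y_1, y_2]}{(y_1, y_2)^2}, \qquad y_1 = x_{n-2},\ y_2 = x_{n-1}.
\]
Multiplication by $\ell$ on $A(P_{n-4})$ fails surjectivity at degree $\lambda_{n-4}$ by Theorem~\ref{thm_WLP_paths} (as $n - 4 \ge 17$), and on $\kk[y_1,y_2]/(y_1,y_2)^2$ it trivially fails surjectivity at degree $0$ since the Hilbert function is $(1,2)$. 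Lemma~\ref{lem_tensor}(a) then yields that the tensor product fails surjectivity at degree $\lambda_{n-4}+1 = \chi_n$, and because multiplication by $\ell$ commutes with any surjection of graded algebras, the same failure pulls back to $A(\CAE_n)$.

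The main obstacle is Case 2: the direct quotient $A(\CAE_n) \twoheadrightarrow A(P_{n-1})$ only detects a failure at the lower degree $\lambda_{n-1}$, so the two-step quotient coupled with the tensor-product lemma is essential to exhibit the failure at the correct degree $\chi_n$. The numerical inequalities $n - 1 \ge 17$ and $n - 4 \ge 17$ needed to invoke Theorem~\ref{thm_WLP_paths} are both ensured by $n \ge 21$, so no extra low-$n$ base cases are introduced beyond the ones already handled by the {\tt Macaulay2} computation.
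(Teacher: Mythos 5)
Your proposal is correct and follows essentially the same route as the paper's proof: the {\tt Macaulay2} verification for $4\le n\le 20$, the exact sequence $0 \to A(P_{n-4})(-1)\xrightarrow{\cdot x_n} A(\CAE_n)\to A(P_{n-1})\to 0$, the dichotomy $\chi_n=\lambda_{n-1}$ versus $\chi_n=\lambda_{n-1}+1=\lambda_{n-4}+1$ from Lemmas~\ref{lem_compare_lambda} and~\ref{lem_compare_lambda_chi}, and in the second case the two-step quotient onto $A(P_{n-4})\otimes_\kk \kk[y_1,y_2]/(y_1,y_2)^2$ combined with Lemma~\ref{lem_tensor}(a) and the surjectivity-failure Claim from Theorem~\ref{thm_WLP_paths}. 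The only difference is that you spell out the colon-ideal computation identifying $R/(J:x_n)$ with $A(P_{n-4})$, which the paper leaves implicit.
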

\begin{proof}
By using {\tt Macaulay2} \cite{M2codes} to compute the Hilbert series of $A(\CAE_n)/\ell A(\CAE_n)$ and $A(\CAE_n)$ with $4\leq n\leq 20$, we can check that:
	\begin{itemize}
		\item $A(\CAE_n)$ has the WLP for each $4\leq n\leq 14$ and $n\notin\{9,12,13\}$; 
		\item for $n \in  \{9,12,15,16,18,19\}$,  $A(\CAE_n)$ fails the surjectivity from degree $\chi_n$ to degree $\chi_n+1$;
		\item for $n \in  \{13,17,20\}$, $A(\CAE_n)$ fails the injectivity from degree $\chi_n -1$ to degree $\chi_n.$  
	\end{itemize}
	Now assume that $n\geq 21$.  We will prove that $A(\CAE_n)$ fails the surjectivity from degree $\chi_n$ to degree $\chi_n +1$. The proof is similar to that of Theorem~\ref{thm_WLP_cycles}. Recall that  by Lemmas~\ref{lem_compare_lambda} and \ref{lem_compare_lambda_chi},
	\begin{equation}
	\label{eq_compare_chi_lambda}
	\lambda_{n-1}\leq \chi_n\leq \lambda_{n-4}+1 \le \lambda_{n-1}+1.
	\end{equation}
We consider the following two cases.

\noindent \underline{{\bf Case 1: $\chi_n=\lambda_{n-1}$.}} Consider the exact sequence
$$\xymatrix {0\ar[r]& A(P_{n-4}) (-1)\ar[rr]^{\quad \cdot x_{n}}&&A(\CAE_n)\ar[r]&A(P_{n-1})\ar[r]&0}.$$
The proof of Theorem~\ref{thm_WLP_paths} shows that the multiplication map 
	$$\cdot\ell: [A(P_{n-1})]_{\chi_n}\longrightarrow [A(P_{n-1})]_{\chi_n+1}$$ is not surjective for any $n\geq 18.$ Hence, the  map 
	\[
	\cdot\ell: [A(\CAE_{n})]_{\chi_n}\longrightarrow [A(\CAE_{n})]_{\chi_n+1}
	\] is also not surjective, as desired.

\noindent \underline{{\bf Case 2: $\chi_n = \lambda_{n-1}+1$.}} In this case, the chain \eqref{eq_compare_chi_lambda} yields $\lambda_{n-1}=\lambda_{n-4}$. As in the proof of Theorem~\ref{thm_WLP_cycles}, denoting $y_1=x_{n-1}, y_2=x_{n-2}$, we have the following diagram
	\begin{align*}
	\xymatrix{[A(\CAE_n)]_{\chi_n}\ar@{->>}[r]^{/(x_{n})\quad} \ar[d]_{\cdot \ell} & [A(P_{n-1})]_{\lambda_{n-1}+1}\ar@{->>}[r]^{/(x_{n-3})\quad\qquad} & \left[A(P_{n-4}) \otimes_\kk \frac{\kk[y_1,y_2]}{(y_1,y_2)^2}\right]_{\lambda_{n-4}+1}\ar[d]^{\cdot \ell}\\
		[A(\CAE_n)]_{\chi_n+1}\ar@{->>}[r] & [A(P_{n-1})]_{\lambda_{n-1}+2}\ar@{->>}[r]& \left[A(P_{n-4}) \otimes_\kk \frac{\kk[y_1,y_2]}{(y_1,y_2)^2}\right]_{\lambda_{n-4}+2}}
	\end{align*}
Since the right vertical map of the diagram fails the surjectivity at degree $\lambda_{n-4}+1$, we conclude that left vertical map in the diagram fails the surjectivity at degree $\lambda_{n-4}+1=\chi_n$, as desired.
\end{proof}

Finally, we show the last main result.
\begin{Theo}\label{thm_WLP_pan}
The algebra $A(\Pan_n)$ associated to the pan graph $\Pan_n$ has the WLP if and only if $n\in \{3,4,\ldots,10,12,13,16\}.$ 
\end{Theo}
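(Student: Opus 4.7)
The plan is to adapt the inductive strategy of Theorems \ref{thm_WLP_paths}, \ref{thm_WLP_cycles}, and \ref{thm_WLP_CE}. First I would use \texttt{Macaulay2} to compute the Hilbert series of $A(\Pan_n)$ and $A(\Pan_n)/\ell A(\Pan_n)$ for all $3 \le n \le 20$, verifying the theorem in this range and recording, for each failing $n$, whether $\cdot\ell$ fails surjectivity at degree $\zeta_n$ or injectivity at degree $\zeta_n-1$, where $\zeta_n$ denotes the mode of $I(\Pan_n;t)$ from Lemma \ref{lem_compare_lambda_chi-zeta}.

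For $n \ge 21$ I would exhibit failure of surjectivity of $\cdot \ell$ at degree $\zeta_n$ using two short exact sequences. Numbering the vertices so that the pendant is attached at $x_1$, quotienting $A(\Pan_n)$ by the pendant variable $x_{n+1}$ gives
\begin{equation*}
0 \to A(P_{n-1})(-1) \xrightarrow{\cdot x_{n+1}} A(\Pan_n) \to A(C_n) \to 0,
\end{equation*}
because $(K:x_{n+1}) = K + (x_1, x_{n+1})$ leaves a path on $n-1$ vertices. Quotienting instead by the cycle vertex $x_n$ adjacent to $x_1$ gives
\begin{equation*}
0 \to \bigl(A(P_{n-3}) \otimes_\kk \tfrac{\kk[x_{n+1}]}{(x_{n+1}^2)}\bigr)(-1) \xrightarrow{\cdot x_n} A(\Pan_n) \to A(P_n) \to 0,
\end{equation*}
since $\Pan_n\setminus\{x_n\}$ is the path $(n+1)\text{-}1\text{-}2\text{-}\cdots\text{-}(n-1)$ on $n$ vertices. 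In the associated commutative diagrams of $\cdot\ell$ maps, a failure of surjectivity on the right-hand quotient automatically forces the same failure on $A(\Pan_n)$.

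The case analysis then proceeds from the sandwich $\chi_{n+1} \le \zeta_n \le \rho_n+1 \le \lambda_n+1 \le \chi_{n+1}+1$ of Lemma \ref{lem_compare_lambda_chi-zeta}: either $\zeta_n = \lambda_n$, or $\zeta_n = \lambda_n+1$ (in which case the sandwich collapses to $\chi_{n+1} = \rho_n = \lambda_n$ and $\zeta_n = \rho_n+1$). In the first case, the surjectivity failure of $A(P_n)$ at $\lambda_n$ from Theorem \ref{thm_WLP_paths} propagates through the second exact sequence, yielding the required failure at $\zeta_n = \lambda_n$. In the second case, I would apply the tensor-product Lemma \ref{lem_tensor}(a) to boost the failure degree by one, combining the surjectivity failure of $A(P_{n-1})$ at $\lambda_{n-1}$ from Theorem \ref{thm_WLP_paths} with the trivial surjectivity failure from degree $0$ of an auxiliary two-dimensional quotient (for example $A(P_2) = \kk[y,z]/(y,z)^2$), exactly as in Case 2 of the proofs of Theorems \ref{thm_WLP_cycles} and \ref{thm_WLP_CE}.

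The main obstacle is the delicate numerical alignment between the four modes $\lambda_n$, $\rho_n$, $\chi_{n+1}$, and $\zeta_n$, which differ pairwise by at most one: in every subcase one must check that the degree where the auxiliary algebra is known to fail matches $\zeta_n$ exactly. The edge case $\zeta_n = \lambda_n + 1$ is the trickiest, because the right-hand sides of both of the readily available exact sequences fail surjectivity only at degree $\lambda_n$; this forces a further subdivision (analogous to the $\lambda_n = \lambda_{n-1}$ versus $\lambda_n = \lambda_{n-1}+1$ split used in Theorem \ref{thm_WLP_cycles}) in which one first reduces $A(\Pan_n)$ by an additional interior cycle variable to expose a tensor product with a factor such as $A(P_2)$, and then invokes Lemma \ref{lem_tensor}(a) to propagate the failure from degree $\lambda_n$ up to $\zeta_n = \lambda_n+1$.
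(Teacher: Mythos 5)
Your computational range and your Case 1 are fine, and your boosting trick via $A(P_{n-3})\otimes_\kk A(P_2)$ does handle the part of Case 2 where $\lambda_{n-3}=\lambda_n$ (this is exactly Subcase 2.1 of the paper's argument). The genuine gap is the remaining subcase $\zeta_n=\lambda_n+1$ with $\lambda_n=\lambda_{n-3}+1$. There, every quotient you have at your disposal fails surjectivity one degree too low: $A(P_n)$ and $A(C_n)$ fail at $\lambda_n=\rho_n=\zeta_n-1$; the two-step quotient $A(P_{n-3})\otimes_\kk A(P_2)$ fails at $\lambda_{n-3}+1=\lambda_n=\zeta_n-1$; and the quotient by the attachment vertex, $A(P_{n-1})\otimes_\kk \kk[x_{n+1}]/(x_{n+1}^2)$, gives nothing via Lemma \ref{lem_tensor}(a) because $\kk[x_{n+1}]/(x_{n+1}^2)$ does \emph{not} fail surjectivity in degree $0$ (its Hilbert function is $(1,1)$). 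Your suggested fix, ``combine the failure of $A(P_{n-1})$ at $\lambda_{n-1}$ with $A(P_2)$,'' is not available: $\Pan_n$ has only one cut vertex, so no sequence of variable quotients of $A(\Pan_n)$ produces $A(P_k)\otimes_\kk A(P_2)$ with $k>n-3$, and in this subcase $\lambda_k\le\lambda_{n-3}=\zeta_n-2$, so the boosted failure again lands at $\zeta_n-1$. Moreover your blanket claim that surjectivity fails at $\zeta_n$ for \emph{all} $n\ge 21$ is almost certainly false, not just unproven: already for $n\in\{15,19\}$ the computations show $A(\Pan_n)$ fails only injectivity (from $\zeta_n-1$ to $\zeta_n$), and the same pattern persists for larger $n$ in this subcase.

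This is precisely where the paper changes mechanism: instead of surjectivity it proves failure of \emph{injectivity} from degree $\zeta_n-1$ to $\zeta_n$, using injective multiplication maps into $A(\Pan_n)$ by a variable $y$ (giving $A(P_m)(-1)\hookrightarrow A(\Pan_n)$ with $m\in\{n-2,n-1\}$ chosen so that $\lambda_m=\lambda_n$) or by the quadratic monomial $x_{n-4}x_{n-2}$ (giving $A(P_{n-4})(-2)\hookrightarrow A(\Pan_n)$ when $\lambda_n=\lambda_{n-1}+1$), combined with Proposition \ref{Fails_injectivityofPaths}, which supplies the injectivity failure of $A(P_m)$ at degree $\lambda_m-1$. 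Your proposal never invokes Proposition \ref{Fails_injectivityofPaths} or any injectivity argument for $n\ge 21$, so the induction is incomplete in this subcase; to repair it you would need to import that proposition and the subcase analysis on $m=\max\{j\le n\mid \lambda_j=\lambda_{j-1}+1\}$, essentially reproducing the paper's Subcase 2.2.
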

\begin{proof}

\begin{figure}[!h]
\centering
		\begin{tikzpicture}[
			every edge/.style = {draw=black,very thick},
			vrtx/.style args = {#1/#2}{
				circle, draw, thick, fill=black,
				minimum size=1mm, label=#1:#2}
			]
			\node (n1) [vrtx=above/$x_2$]  at (-1,0) {};
			\node (n2) [vrtx=above/$x_1$]at (1,0)  {};
			\node (n3) [vrtx=below/$x_6$]at (2,-1.5)  {};
            \node (n4) [vrtx=below/$x_5$]at (1,-3) {};
			\node (n5) [vrtx=below/$x_4$]at (-1,-3)  {};
			\node (n6) [vrtx=left/$x_3$]at (-2,-1.5)  {};
			\node (n7) [vrtx=below/$x_7$]at (4,-1.5)  {};
			\foreach \from/\to in {n1/n2,n2/n3,n3/n4,n4/n5,n5/n6,n6/n1, n3/n7}		
			\draw (\from) -- (\to);		
		\end{tikzpicture}
	\caption{$\Pan_6$}
\end{figure}

The artinian algebra associated to $\Pan_n$ is
\[
A(\Pan_n)=\frac{\kk[x_1,\ldots,x_{n+1}]}{(x_1^2,\ldots,x_{n+1}^2)+(x_1x_2,x_2x_3,\ldots,x_{n-1}x_n,x_nx_{n+1})+(x_1x_n)}.
\]
By using {\tt Macaulay2} \cite{M2codes} to compute the Hilbert series of $A(\Pan_n)$ and $\dfrac{A(\Pan_n)}{\ell A(\Pan_n)}$ with $3\leq n\leq 20$, we can check that:
	\begin{itemize}
		\item $A(\Pan_n)$ has the WLP for each $3\leq n\leq 16$ and $n\notin\{11,14,15\}$; 
		\item for $n \in  \{11,14,17,18,20\}$, $A(\Pan_n)$ fails the surjectivity from degree $\zeta_n$ to degree $\zeta_n+1$;
		\item for $n \in  \{15,19\}$, $A(\Pan_n)$ fails the injectivity from degree $\zeta_n -1$ to degree $\zeta_n.$  
	\end{itemize}
	Now assume that $n\geq 21$. Recall that by Lemma \ref{lem_compare_lambda_chi-zeta},
	\begin{equation}
	 \label{eq_bounding_chi}
	 \chi_{n+1}\leq \zeta_n\leq \rho_{n}+1 \le \lambda_{n}+1\leq \chi_{n+1}+1.
	\end{equation}
	We consider the following two cases.

\noindent \underline{{\bf Case 1: $\zeta_n=\chi_{n+1}$.}} In this case, $A(\Pan_n)$ fails the surjectivity from degree $\zeta_n$ to degree $\zeta_n+1$  by using the exact sequence
$$\xymatrix {0\ar[r]& A(P_{n-3}) (-2)\ar[rr]^{\quad \cdot x_1x_{n+1}}&&A(\Pan_n)\ar[r]&A(\CAE_{n+1})\ar[r]&0}.$$
Indeed, the proof of Theorem~\ref{thm_WLP_CE} shows that the multiplication map 
$$\cdot\ell: [A(\CAE_{n+1})]_{\zeta_n}\longrightarrow [A(\CAE_{n+1})]_{\zeta_n+1}$$ is not surjective for any $n\geq 21.$ Hence the map 
\[
\cdot\ell: [A(\Pan_n)]_{\zeta_n}\longrightarrow [A(\Pan_n)]_{\zeta_n+1}
\] is also not surjective, as desired.
	
	\noindent \underline{{\bf Case 2: $\zeta_n=\chi_{n+1}+1$.}} In this case, the chain \eqref{eq_bounding_chi} yields $\lambda_{n}=\rho_{n}=\zeta_n-1$. Since $\lambda_n-\lambda_{n-3}\leq 1$ by Lemma \ref{lem_compare_lambda}, we have the following two subcases.

	\noindent \underline{{\bf Subcase 2.1: $\lambda_n=\lambda_{n-3}$.}} As in the proof of Theorem~\ref{thm_WLP_cycles}, denote $y_1=x_{n}, y_2=x_{n+1}$, we have the following diagram
	\begin{align*}
	\xymatrix{[A(\Pan_n)]_{\zeta_n}\ar@{->>}[rr]^{/(x_{n-1})\quad} \ar[d]_{\cdot \ell} && [A(P_{n})]_{\lambda_{n}+1}\ar@{->>}[rr]^{/(x_{1})\qquad\qquad} && \left[A(P_{n-3}) \otimes_\kk \frac{\kk[y_1,y_2]}{(y_1,y_2)^2}\right]_{\lambda_{n-3}+1}\ar[d]^{\cdot \ell}\\
		[A(\Pan_n)]_{\zeta_n+1}\ar@{->>}[rr] && [A(P_{n})]_{\lambda_{n}+2}\ar@{->>}[rr]&& \left[A(P_{n-3}) \otimes_\kk \frac{\kk[y_1,y_2]}{(y_1,y_2)^2}\right]_{\lambda_{n-3}+2}}
	\end{align*}
	Since the right vertical map of the above diagram fails the surjectivity at degree $\lambda_{n-3}+1=\zeta_n$, we conclude that the left vertical map  fails the surjectivity at the same degree. 
	
	\noindent \underline{{\bf Subcase 2.2: $\lambda_n=\lambda_{n-3}+1$.}} Set
	$$m=\max\{j \mid j\leq n\; \text{and}\; \lambda_j=\lambda_{j-1}+1\}.$$
	Then by Lemma \ref{lem_compare_lambda}, $n-2\leq m\leq n$ and $\lambda_m=\lambda_n$. First, we consider the case where $m\ne n$. Set
	$$y=\begin{cases}
	x_{n-2}&\text{if}\; m=n-2\\
	x_{n+1}&\text{if}\; m=n-1.
	\end{cases} $$
	Then we have the following diagram
	\begin{align*}
	\xymatrix{0\ar[r]& [A(P_m)]_{\zeta_n-2}\ar[r]^{\cdot y}\ar[d]\ar[d]_{\cdot \ell} &[A(\Pan_n)]_{\zeta_n-1} \ar[d]^{\cdot \ell} \\ 0\ar[r]& [A(P_m)]_{\zeta_n-1}\ar[r]_{\cdot y} &[A(\Pan_n)]_{\zeta_n}  }
	\end{align*}
	Since $\zeta_n-2=\lambda_n-1=\lambda_m-1$, we have the first vertical map of the diagram fails the injectivity at degree $\lambda_m-1$ by Proposition~\ref{Fails_injectivityofPaths}. It follows that the second vertical map of the diagram fails the injectivity at degree $\zeta_n-1$. 
	
	To complete the proof of the theorem, we consider the case where $m=n$. In this case, one has $\rho_n=\lambda_n=\lambda_{n-1}+1$. By Lemmas~\ref{lem_compare_lambda} and~\ref{lem_compare_rho_lambda}, $\lambda_{n-1}=\lambda_{n-4}=\lambda_{n-5}+1$. Hence $\lambda_{n-4}=\zeta_n-2$. Now we consider the following diagram
	\begin{align*}
	\xymatrix{0\ar[r]& [A(P_{n-4})]_{\zeta_n-3}\ar[rr]^{\cdot x_{n-4}x_{n-2}}\ar[d]\ar[d]_{\cdot \ell} &&[A(\Pan_n)]_{\zeta_n-1} \ar[d]^{\cdot \ell} \\ 0\ar[r]& [A(P_{n-4})]_{\zeta_n-2}\ar[rr]_{\cdot x_{n-4}x_{n-2}} &&[A(\Pan_n)]_{\zeta_n}.  }
	\end{align*}
	By Proposition~\ref{Fails_injectivityofPaths}, the first vertical map of the diagram fails the injectivity at degree $\lambda_{n-4}-1=\zeta_n-3$. It follows that the second vertical map of the diagram fails the injectivity at degree $\zeta_n-1$. Thus we complete the proof.
\end{proof}

\section*{Acknowledgments}
It is our pleasure  to thank Prof.\,Le Tuan Hoa for his constant encouragement during the preparation of this work.  The first named author was partially supported by the Vietnam Academy of Science and Technology (through the grant NCXS02.01/22-23). The second named author was partially supported by the NAFOSTED (Vietnam) under the grant number 101.04-2023.07. Parts of the paper were written during our stay at the Vietnam Institute for Advanced Study in Mathematics (VIASM). We would like to thank  the VIASM for its generous support and hospitality. Finally, the authors are also grateful to two anonymous referees for their careful reading of this manuscript and for many critical suggestions.

\bibliographystyle{plain} 
\bibliography{WLP_Graphs} 

\begin{thebibliography}{10}

\bibitem{AEMS87}
Y.~Alavi, P.J. Malde, A.J. Schwenk, and P.~Erd\H{o}s.
\newblock The vertex independence sequence of a graph is not constrained.
\newblock {\em Congr. Numer.}, 58:15--23, 1987.

\bibitem{AB2020}
N.~Altafi and M.~Boij.
\newblock The weak {L}efschetz property of equigenerated monomial ideals.
\newblock {\em J. Algebra}, 556:136--168, 2020.

\bibitem{AN2018}
N.~Altafi and N.~Nemati.
\newblock Lefschetz properties of monomial algebras with almost linear
  resolution.
\newblock {\em Comm. Algebra}, 48(4):1499--1509, 2020.

\bibitem{BK2013}
A.~Bhattacharyya and J.~Kahn.
\newblock A bipartite graph with non-unimodal independent set sequence.
\newblock {\em Electron. J. Combin.}, 20(4):Paper 11, 3, 2013.

\bibitem{BMMNZ12}
M.~Boij, J.C. Migliore, R.M. Mir\'{o}-Roig, U.~Nagel, and F.~Zanello.
\newblock On the shape of a pure {$O$}-sequence.
\newblock {\em Mem. Amer. Math. Soc.}, 218(1024):viii+78, 2012.

\bibitem{BK2007}
H.~Brenner and A.~Kaid.
\newblock Syzygy bundles on {$\mathbb{P}^2$} and the weak {L}efschetz property.
\newblock {\em Illinois J. Math.}, 51(4):1299--1308, 2007.

\bibitem{BK2011}
H.~Brenner and A.~Kaid.
\newblock A note on the weak {L}efschetz property of monomial complete
  intersections in positive characteristic.
\newblock {\em Collect. Math.}, 62(1):85--93, 2011.

\bibitem{CookII2012}
D.~Cook~II.
\newblock The {L}efschetz properties of monomial complete intersections in
  positive characteristic.
\newblock {\em J. Algebra}, 369:42--58, 2012.

\bibitem{CN2011}
D.~Cook~II and U.~Nagel.
\newblock The weak {L}efschetz property, monomial ideals, and lozenges.
\newblock {\em Illinois J. Math.}, 55(1):377--395, 2011.

\bibitem{Cooperetal2023}
S.~M. Cooper, S.~Faridi, T.~Holleben, L.~Nicklasson, and A.V. Tuyl.
\newblock The weak {L}efschetz property of whiskered graphs.
\newblock {\em To apper in Lefschetz Properties: Current and New Directions,
  Springer INdAM series}, 2023.

\bibitem{DaoNair2022}
H.~Dao and R.~Nair.
\newblock On the lefschetz property for quotients by monomial ideals containing
  squares of variables.
\newblock {\em Communications in Algebra}, 52(3):1260--1270, 2024.

\bibitem{GLN2020}
O.~Gasanova, S.~Lundqvist, and L.~Nicklasson.
\newblock On decomposing monomial algebras with the {L}efschetz properties.
\newblock {\em J. Pure Appl. Algebra}, 226(6):Paper No. 106968, 15, 2022.

\bibitem{Macaulay2}
D.R. Grayson and M.E. Stillman.
\newblock Macaulay2, a software system for research in algebraic geometry.

\bibitem{GH83}
I.~Gutman and F.~Harary.
\newblock Generalizations of the matching polynomial.
\newblock {\em Utilitas Math.}, 24:97--106, 1983.

\bibitem{Hamidoune90}
Y.O. Hamidoune.
\newblock On the numbers of independent {$k$}-sets in a claw free graph.
\newblock {\em J. Combin. Theory Ser. B}, 50(2):241--244, 1990.

\bibitem{HSS2011}
B.~Harbourne, H.~Schenck, and A.~Seceleanu.
\newblock Inverse systems, {G}elfand-{T}setlin patterns and the weak
  {L}efschetz property.
\newblock {\em J. Lond. Math. Soc. (2)}, 84(3):712--730, 2011.

\bibitem{HMMNWW2013}
T.~Harima, T.~Maeno, H.~Morita, Y.~Numata, A.~Wachi, and J.~Watanabe.
\newblock {\em The {L}efschetz properties}, volume 2080 of {\em Lecture Notes
  in Mathematics}.
\newblock Springer, Heidelberg, 2013.

\bibitem{HL94}
C.~Hoede and X.L. Li.
\newblock Clique polynomials and independent set polynomials of graphs.
\newblock {\em Discrete Math.}, 125:219--228, 1994.

\bibitem{GL84}
G.~Hopkins and W.~Staton.
\newblock Some identities arising from the {F}ibonacci numbers of certain
  graphs.
\newblock {\em Fibonacci Quart.}, 22(3):255--258, 1984.

\bibitem{KV2011}
A.R. Kustin and A.~Vraciu.
\newblock The weak {L}efschetz property for monomial complete intersection in
  positive characteristic.
\newblock {\em Trans. Amer. Math. Soc.}, 366, 09 2014.

\bibitem{LF2010}
J.~Li and F.~Zanello.
\newblock Monomial complete intersections, the weak {L}efschetz property and
  plane partitions.
\newblock {\em Discrete Math.}, 310(24):3558--3570, 2010.

\bibitem{MMO2013}
E.~Mezzetti, R.M. Mir\'{o}-Roig, and G.~Ottaviani.
\newblock Laplace equations and the weak {L}efschetz property.
\newblock {\em Canad. J. Math.}, 65(3):634--654, 2013.

\bibitem{MM2016}
M.~Micha\l{ek} and R.M. Mir\'{o}-Roig.
\newblock Smooth monomial {T}ogliatti systems of cubics.
\newblock {\em J. Combin. Theory Ser. A}, 143:66--87, 2016.

\bibitem{MNS2020}
J.~Migliore, U.~Nagel, and H.~Schenck.
\newblock The weak {L}efschetz property for quotients by quadratic monomials.
\newblock {\em Math. Scand.}, 126(1):41--60, 2020.

\bibitem{MMR03}
J.C. Migliore and R.M. Mir\'{o}-Roig.
\newblock Ideals of general forms and the ubiquity of the weak {L}efschetz
  property.
\newblock {\em J. Pure Appl. Algebra}, 182(1):79--107, 2003.

\bibitem{MMR17}
J.C. Migliore and R.M. Mir\'{o}-Roig.
\newblock On the strong {L}efschetz problem for uniform powers of general
  linear forms in {$k[x,y,z]$}.
\newblock {\em Proc. Amer. Math. Soc.}, 146(2):507--523, 2018.

\bibitem{MMN2011}
J.C. Migliore, R.M. Mir\'{o}-Roig, and U.~Nagel.
\newblock Monomial ideals, almost complete intersections and the weak
  {L}efschetz property.
\newblock {\em Trans. Amer. Math. Soc.}, 363(1):229--257, 2011.

\bibitem{MMN2012}
J.C. Migliore, R.M. Mir\'{o}-Roig, and U.~Nagel.
\newblock On the weak {L}efschetz property for powers of linear forms.
\newblock {\em Algebra Number Theory}, 6(3):487--526, 2012.

\bibitem{MN2013}
J.C. Migliore and U.~Nagel.
\newblock Survey article: a tour of the weak and strong {L}efschetz properties.
\newblock {\em J. Commut. Algebra}, 5(3):329--358, 2013.

\bibitem{MiroRoig2016}
R.M. Mir\'{o}-Roig.
\newblock Harbourne, {S}chenck and {S}eceleanu's conjecture.
\newblock {\em J. Algebra}, 462:54--66, 2016.

\bibitem{MRT19}
R.M. Mir\'{o}-Roig and Q.H. Tran.
\newblock On the weak {L}efschetz property for almost complete intersections
  generated by uniform powers of general linear forms.
\newblock {\em J. Algebra}, 551:209--231, 2020.

\bibitem{MRT2019}
R.M. Mir\'{o}-Roig and Q.H. Tran.
\newblock The weak {L}efschetz property for {A}rtinian {G}orenstein algebras of
  codimension three.
\newblock {\em J. Pure Appl. Algebra}, 224(7):106305, 2020.

\bibitem{MRT2020}
R.M. Mir\'{o}-Roig and Q.H. Tran.
\newblock The weak {L}efschetz property of {G}orenstein algebras of codimension
  three associated to the {A}p\'{e}ry sets.
\newblock {\em Linear Algebra Appl.}, 604:346--369, 2020.

\bibitem{M2codes}
Hop~D. Nguyen and Q.H. Tran.
\newblock Macaulay2 codes for checking the weak {L}efschetz property of
  artinian algebra associated to graphs.
\newblock Available at
  https://sites.google.com/view/tranquanghoassite/software?authuser=0.

\bibitem{Nick2018}
L.~Nicklasson.
\newblock The strong {L}efschetz property of monomial complete intersections in
  two variables.
\newblock {\em Collect. Math.}, 69(3):359--375, 2018.

\bibitem{HT2023}
H.V.N. Phuong and Q.H. Tran.
\newblock A new proof of {S}tanley's theorem on the strong {L}efschetz
  property.
\newblock {\em Colloq. Math.}, 173(1):1--8, 2023.

\bibitem{Stanley1980}
R.P. Stanley.
\newblock Weyl groups, the hard {L}efschetz theorem, and the {S}perner
  property.
\newblock {\em SIAM J. Algebraic Discrete Methods}, 1(2):168--184, 1980.

\bibitem{QHT2020}
Q.H. Tran.
\newblock The {L}efschetz properties of artinian monomial algebras associated
  to some graphs.
\newblock {\em Journal of Science, Hue University of Education}, 59(3):12--22,
  2021.

\end{thebibliography}

\end{document}